\let \le \leqslant
\let \leq \leqslant
\let \ge \geqslant
\let \geq \geqslant
\let \epsilon \varepsilon
\let \phi \varphi
\let\OLDthebibliography\thebibliography
\renewcommand\thebibliography[1]{
  \OLDthebibliography{#1}
  \setlength{\parskip}{0pt}
  \setlength{\itemsep}{0pt plus 0.3ex}
}
\numberwithin{equation}{section}
\providecommand{\UH}{\mathbb{H}}
\providecommand{\CC}{\mathcal{C}}
\DeclareMathOperator{\dist}{dist}
\renewcommand{\epsilon}{\varepsilon}
\providecommand{\W}{\mathcal{W}}
\newtheorem{thm}{Theorem}[section]
\newtheorem{lem}[thm]{Lemma}
\newtheorem{prop}[thm]{Proposition}
\newtheorem{cor}[thm]{Corollary}
\theoremstyle{definition}
\newtheorem*{rmk}{Remark}
\title{Remarks on the regularity of quasislits}
\author[1]{Lukas Schoug}
\author[2]{Atul Shekhar}
\author[3]{Fredrik Viklund}
\affil[1,3]{\it KTH Royal Institute of Technology}
\affil[2]{\it Universit\'e Lyon 1}
\date{}
\begin{document}

\maketitle

\begin{abstract}
A quasislit is the image of a vertical line segment $[0, iy]$, $y > 0$, under a quasiconformal homeomorphism of the upper half-plane fixing $\infty$. Quasislits correspond precisely to curves generated by the Loewner equation with a driving function in the Lip-$\frac{1}{2}$ class.
    It is known that a quasislit is contained in a cone depending only on its Loewner driving function Lip-$\frac{1}{2}$ seminorm, $\sigma$. In this note we use the Loewner equation to give quantitative estimates on the opening angle of this cone in the full range $\sigma <4$. The estimate is shown to be sharp for small $\sigma$. As consequences, we derive explicit H\"older exponents for $\sigma < 4$ as well as estimates on winding rates. We also relate quantitatively the Lip-$\frac{1}{2}$ seminorm with the quasiconformal dilatation and discuss the optimal regularity of quasislits achievable through reparametrization.
\end{abstract}

\section{Introduction}
A quasicircle is the image of the unit circle under a quasiconformal homeomorphism of the complex plane.
Recall that $f \in W^{1,2}_{loc}$ is a $k$-quasiconformal map if it is a homeomorphic solution to the  Beltrami equation,
    $\overline{\partial} f = \mu(z) \partial f$,
where $\mu$ is measurable, complex valued, and such that $k:= \|\mu\|_\infty < 1$ with corresponding maximal dilatation $(1+k)/(1-k)$. Many equivalent characterizations of quasicircles exist, both analytic and geometric, see e.g. \cite{gehring}. For a particular characterization it is quite natural to ask how the geometry of the quasicircle depends quantitatively on the given data and how the data for different characterizations are related. For example, a $k$-quasicircle is the image of a $k$-quasiconformal map and a well-known theorem of Smirnov (motivated by a conjecture of Astala) states that the dimension of such a quasicircle is at most $1+k^2$ for $k$ small.\footnote{Recent work of Ivrii shows however that this bound is not sharp, see Section~\ref{smirnovseq}.}

In this note we will quantify some simple geometric features of quasicircles seen from the point of view of the Loewner equation. In this case it is more convenient to consider \emph{quasislits}, that is, the image of a line segment $\{iy: 0\leq y \leq y_0\}$ $(y_0 \in (0, \infty])$ under a quasiconformal homeomorphism $\UH \rightarrow \UH$, fixing $\infty$. Every quasislit has a Loewner driving function in the Lip-$\frac{1}{2}$ class (see below) and conversely, every continuous function is the Loewner driving function for a quasislit if it is in the Lip-$\frac{1}{2}$ class with small seminorm. We will primarily be interested in understanding how properties of the curve depend quantitatively on this seminorm. 
\subsection{Curves, the Loewner equation, quasiarcs, and Lip-$\frac{1}{2}$} 
A curve is an equivalence class of continuous functions $[0,1] \to \mathbb{C}$, where two representatives are in the same equivalence class if and only if each one can be obtained from the other by an increasing reparametrization. That is, $\gamma_1$ and $\gamma_2$ describe the same curve if and only if there exist increasing homeomorphisms $\alpha_1, \alpha_2 : [0,1] \to [0,1]$ such that $\gamma_1 \circ \alpha_1(t) = \gamma_2 \circ \alpha_2(t), \, t \in [0,1]$. We will almost always consider curves with a particular parametrization chosen and refer to this as a curve as well. 

Let $\lambda_t = \lambda(t)$, $t \in [0,1]$, be a continuous real-valued function with $\lambda_0 = 0$, and consider for $z \in \UH = \{z : \textup{Im} \, z > 0\}$ the solution $(f_t(z))$ to the  Loewner PDE
\begin{align}\label{eq:LPDE}
    \partial_t f_t(z) = -f'_t(z) \frac{2}{z-\lambda_t}, \quad f_0(z) = z.
\end{align}
The family $(f_t)$ is called a Loewner chain and for each $t$, $f_t:\UH \to H_t \subset \UH$ is a conformal map normalized `hydrodynamically' at $\infty$ by $f_t(z) = z - 2t/z + O(1/|z|^2)$. 
If $\lambda_t$ is sufficiently well-behaved, the limit
\begin{equation}\label{eq:gamma}
\gamma(t) = \lim_{y \downarrow 0} f_t(\lambda_t + iy)
\end{equation}
exists for every $t \in [0,1]$, defines a continuous function $t\mapsto \gamma(t)$, and the simply connected domain $H_t$ is the unbounded connected component of $\UH \setminus \gamma[0,t]$.  In this case, we say that the Loewner chain is generated by the curve $\gamma$. Viewed differently, the driving term $\lambda$ generates the (chordal) Loewner curve $\gamma=\gamma^\lambda$ which comes equipped with a particular parametrization from the Loewner equation via \eqref{eq:gamma} called the (half-plane) capacity parametrization. This process can be reversed, and starting from, e.g., a simple curve in $\overline{\mathbb{H}}$ meeting $\mathbb{R}$ non-tangentially at its starting point, otherwise staying in $\UH$, one can parametrize by capacity and recover its driving term from the hydrodynamically normalized uniformizing conformal maps $f_t^{-1} : \mathbb{H} \setminus \gamma[0,t] \to \mathbb{H}$.

\begin{rmk}Applying $z \mapsto z^2$, and `completing' the resulting curve in union with $\mathbb{R}_+$ by a hyperbolic geodesic from the end-point to $\infty$, we can naturally think of a Loewner curve as part of a loop in $\hat{\mathbb{C}}$ through $0$ and $\infty$ containing $\mathbb{R}_+$ as a sub-arc. In fact, any simple loop in $\hat{\mathbb{C}}$ (that does not necessarily contain $\mathbb{R}_+$ as a sub-arc) can be described by a two-sided Loewner equation, driven by a function defined on $\mathbb{R}$, see Section~6 of \cite{Wan18}.\end{rmk}

Here we will be interested in curves corresponding to driving terms in the Lip-$\frac{1}{2}$ class, that is,  functions satisfying
\begin{align*}
    \| \lambda \|_{\frac{1}{2}} := \sup_{s \neq t} \frac{|\lambda_t - \lambda_s|}{|t-s|^{\frac{1}{2}}} < \infty.
\end{align*}
For $\sigma > 0$, write
\begin{align*}
    \Lambda_\sigma = \{ \lambda: [0,1] \to \mathbb{R} \mid \| \lambda \|_{\frac{1}{2}} \leq \sigma, \lambda_0 = 0 \}.
\end{align*}
The fundamental observation is due to Marshall and Rohde \cite{MR05}: there exists $C$ so that if $\lambda \in \Lambda_\sigma$ with $\sigma < C$ then $\gamma^\lambda$ is a quasislit, in particular a simple curve. Lind showed that one can take $C=4$ and that this is sharp in the sense that for each $\sigma \geq 4$, there exists a function $\lambda \in \Lambda_\sigma$ that does not even generate a curve \cite{Li05}. Conversely, if $\gamma[0,t_0]$ is a quasislit generated by $\lambda$, then $\lambda_t$, $t \in [0,t_0]$, is in Lip-$\frac{1}{2}$.  See \cite{LMR10, LR12,  RTZ18} for more. For later reference we note that an equivalent description of a quasislit is as a \emph{quasiarc} (i.e., the image of a line segment under a quasiconformal homeomorphism of $\mathbb{C}$) in $\overline{\mathbb{H}}$ that meets $\mathbb{R}$ non-tangentially at its starting point and otherwise stays in $\mathbb{H}$.

We may now define
\[
\Gamma_\sigma = \{\gamma^\lambda \mid \, \lambda \in \Lambda_\sigma\}, \qquad \sigma < 4
\]
and in what follows, we will consider quasislits $\gamma \in \Gamma_\sigma$.
\subsection{Results}
For $\sigma <4 $, let
\begin{align*}
    \CC_{t,\sigma} \coloneqq \sup_{\gamma \in \Gamma_\sigma} \frac{|\textup{Re}(\gamma(t))|}{\textup{Im}(\gamma(t))}.
\end{align*}
We will estimate $\CC_{t,\sigma}$. In order to state the result, fix $\sigma < 4$ and consider the following equation
\begin{align}\label{eq:psigma}
    e^x = \frac{\sqrt{16-\sigma^2}}{\sqrt{16x^2-\sigma^2(x+1)^2}}.
\end{align}
The right-hand side of \eqref{eq:psigma} is defined for $x>\sigma/(4-\sigma)$. It is a continuous function which is  strictly decreasing from $\infty$ to $0$ as $x$ ranges from $\sigma/(4-\sigma)$ to $\infty$. Therefore, the equation has a unique solution, which we will denote by $p(\sigma)$. Next, define
\begin{align}\label{eq:Lsigma}
    L_\sigma = \frac{\sigma}{\sqrt{16-\sigma^2}}(1+p(\sigma))e^{p(\sigma)}.
\end{align}
We have the following.
\begin{thm}\label{Cbounds}
The following bounds hold uniformly in $t>0$:
\begin{enumerate}[(i)]
    \item If $0<\sigma<4$, then $$\CC_{t,\sigma} \leq L_\sigma.$$
    \item If $0<\sigma<\frac{8}{\pi}$, then $$\CC_{t,\sigma} \leq \frac{\pi \sigma}{\sqrt{64-\pi^2\sigma^2}}.$$
\end{enumerate}

\end{thm}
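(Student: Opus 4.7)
The plan is to pass to the reverse Loewner flow and reduce the problem to a one-dimensional comparison for the aspect ratio $\tilde{X}/\tilde{Y}$ of a trajectory in $\UH$.

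For fixed $t>0$, set $\tilde{\lambda}_\tau := \lambda_t - \lambda_{t-\tau}$, which satisfies $\tilde{\lambda}_0=0$ and $\|\tilde{\lambda}\|_{1/2}=\|\lambda\|_{1/2}\leq\sigma$. A standard tip-trajectory argument yields $\gamma(t)=\tilde{Z}_t$, where $\tilde{Z}_\tau = \tilde{X}_\tau+i\tilde{Y}_\tau \in \UH$ solves the integral equation $\tilde{Z}_\tau = \tilde{\lambda}_\tau - 2\int_0^\tau du/\tilde{Z}_u$ started at $0$. Splitting into real and imaginary parts one has $\partial_\tau\tilde{Y}_\tau = 2\tilde{Y}_\tau/|\tilde{Z}_\tau|^2$ (so $\tilde{Y}$ is strictly increasing), and a product-rule computation with cancellation of the $2\tilde{X}\tilde{Y}/|\tilde{Z}|^2$ terms gives the clean identity
\[
\partial_\tau(\tilde{X}_\tau\tilde{Y}_\tau) = \tilde{Y}_\tau\,\dot{\tilde{\lambda}}_\tau, \qquad \text{equivalently} \qquad \tilde{X}_t\tilde{Y}_t = \int_0^t \tilde{Y}_\tau\,d\tilde{\lambda}_\tau,
\]
the latter interpreted by integration by parts against the monotone, absolutely continuous $\tilde{Y}$.

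Reparametrizing by $y=\tilde{Y}_\tau$ and setting $R(y)=\tilde{X}_\tau/\tilde{Y}_\tau$, I rewrite this as the first-order ODE $R'(y) + 2R/y = (1+R^2)\,\dot{\tilde{\lambda}}/2$, valid in an integrated sense. Combined with the Lip-$1/2$ bound $|\tilde{\lambda}_\tau|\leq\sigma\sqrt{\tau}$ and the self-consistent lower bound $\tilde{Y}_\tau\gtrsim\sqrt{\tau}$ (obtained from $d\tilde{Y}^2/d\tau = 4/(1+R^2)$), this reduces the problem to analyzing the supremum of the solution to a one-dimensional comparison ODE. The scaling solution associated to the self-similar driver $\tilde{\lambda}_\tau=\sigma\sqrt{\tau}$ is $R\equiv\sigma/\sqrt{16-\sigma^2}$, but general admissible drivers can push $R$ strictly above this value. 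The plan is then to integrate the comparison ODE through a substitution of the form $y = y_0 e^p$, with $p$ quantifying the length of the extremal excursion, and to use the Lip-$1/2$ constraint as a boundary condition to identify the transcendental equation \eqref{eq:psigma} for $p(\sigma)$; the bound $L_\sigma=(\sigma/\sqrt{16-\sigma^2})(1+p)e^p$ of part (i) then arises as the value of $R$ at the end of this extremal excursion.

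For part (ii), valid in the smaller range $\sigma<8/\pi$, I expect a sharper bound to follow from a more direct estimate of $\int_0^t\tilde{Y}_\tau\,d\tilde{\lambda}_\tau$ via a Young/Hölder-type inequality that exploits the Lip-$1/2$ regularity directly, producing the trigonometric constant $\pi/8$. The principal obstacle throughout is the low regularity of $\lambda$: since Lip-$1/2$ does not provide a pointwise bound on $\dot{\lambda}$, every estimate must be carried out in integrated form, and the nonlinearity $(1+R^2)/2$ in the forcing of the comparison ODE precludes a naive Gronwall argument. Identifying the correct extremal trajectory, equivalently, the correct excursion length $p(\sigma)$, is the technical heart of the proof.
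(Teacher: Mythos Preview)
Your framework is correct and matches the paper: the reverse-flow representation of $\gamma(t)$, the product identity $d(X_\tau Y_\tau)=Y_\tau\,d\tilde\lambda_\tau$, and the aspect ratio $R=X/Y$ are exactly the objects the paper uses. However, what you have written is a plan, not a proof; the substantive steps are missing in both parts.

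For part (ii), saying that a ``Young/H\"older-type inequality'' should produce the constant $\pi/8$ is not enough. The paper integrates by parts to get $X_sY_s=y\beta_s+\int_0^s(\beta_s-\beta_r)\,dY_r$, passes to the time scale where $\tilde Y_s=\sqrt{y^2+s}$, and then uses two ingredients you do not supply: first, the explicit evaluation
\[
\int_0^s\frac{\sqrt{s-r}}{2\sqrt{y^2+r}}\,dr=(y^2+s)\Bigl(\tfrac{\pi}{4}-I\bigl(\tfrac{y}{\sqrt{y^2+s}}\bigr)\Bigr),
\]
which is where $\pi/8$ actually comes from; and second, a self-referential bootstrap. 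One only has $|\beta_{\theta_s}-\beta_{\theta_r}|\le\sigma\sqrt{\theta_s-\theta_r}$, and $\theta_s-\theta_r=\tfrac14\int_r^s(1+\tilde W_u^2)\,du$ depends on the very quantity being bounded. Setting $R_s=\sup_{r\le s}|\tilde W_r|$, this yields $|\tilde W_s|\le(\pi\sigma/8)\sqrt{1+R_s^2}$, hence $R_s\le(\pi\sigma/8)\sqrt{1+R_s^2}$, and \emph{solving} this inequality for $R_s$ is what gives the stated bound. A direct H\"older estimate that ignores this feedback does not close.

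For part (i), your heuristic (substitute $y=y_0e^p$ and read off $p(\sigma)$ as an ``excursion length'') does not correspond to how the transcendental equation actually arises, and you have not written down the comparison ODE, let alone solved it. The paper's argument is: localize to an excursion where $W>K_\sigma:=\sigma/\sqrt{16-\sigma^2}$; use Gr\"onwall to dominate $X$ by the explicit solution (in terms of the Lambert $\W$ function) of $dZ=\sigma\,d\sqrt{s}-(\sigma^2/8)\,ds/Z$; apply Gr\"onwall a second time to bound the time change $\theta_s$ by the solution of an auxiliary ODE, obtaining $\theta_s\le\max\bigl(M/Y_{s_0}^2,\,1/(4-\kappa)\bigr)(Y_{s_0}^2+s)$ with a free parameter $\kappa\in(\sigma^2/4,4)$; and finally \emph{optimize} over $\kappa$. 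Equation \eqref{eq:psigma} is the condition that the two terms in the $\max$ agree, and $L_\sigma$ is their common value. None of these steps---the Lambert-$\W$ comparison, the second Gr\"onwall, or the optimization---appears in your outline, and the ODE $R'+2R/y=(1+R^2)\dot{\tilde\lambda}/2$ you write, while correct formally, cannot be analyzed by ``a naive Gr\"onwall argument'' as you yourself note, so you still owe the replacement argument.
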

We prove $(ii)$ in Section~\ref{part2} and $(i)$ in Section~\ref{part1}.

The function $L_{\sigma}$ has the following expansion as $\sigma \to 0$: 
\begin{equation}\label{expand-L}
L_{\sigma} = \frac{1}{4}\biggl(1+ \frac{1}{\mathcal{W}(1)}\biggr)\sigma + O(\sigma^3),
\end{equation}
where $\mathcal{W}(\cdot)$ is the Lambert $W$ function, so that $L_{\sigma} \sim 0.69 \sigma$ as $\sigma \to 0$. Comparing with $(ii)$ we see that the estimate in $(i)$ is not sharp, but does provide the first non-trivial and explicit bound that holds for all $\sigma < 4$. One the other hand, by choosing the driving function $\lambda_t = \sigma \sqrt{t} \in \Lambda_\sigma$ we see that for all $t>0$
\begin{align*}
    \CC_{t,\sigma} \geq \tan\left( \frac{\pi}{2} \frac{\sigma}{\sqrt{16+\sigma^2}} \right),
\end{align*}
see Example 4.12 in \cite{Law05}. This shows that the estimate $(ii)$ of Theorem~\ref{Cbounds} is sharp for small $\sigma$, and we have
\[\CC_{t,\sigma} = \frac{\pi}{8} \sigma + O(\sigma^3).\]

Let us discuss some consequences of Theorem~\ref{Cbounds}. Suppose $\gamma \in \Gamma_\sigma$ with $\sigma <4$. Then by \cite{RTZ18}, Remark 4.2, there exists $\alpha_\sigma \in (0,\frac{1}{2}]$, depending only on $\sigma$, such that $\gamma(t)$, $t \in [0,1]$, parametrized by half-plane capacity is Hölder continuous with exponent $\alpha_\sigma$.

Time $0$ is special for Loewner curves in $\UH$ parametrized by capacity: under weak assumptions, such curves are always H\"older-$\frac{1}{2}$ at $t=0$, see, e.g., \cite{JVL11}. To achieve better regularity one can restrict attention to strictly positive times (or consider driving terms that are constant during a small time interval starting with $0$).  For this, let us fix $\epsilon > 0$. It is not hard to show that if $\sigma < 2$, then on the interval $[\epsilon, 1]$ one has $\alpha_\sigma \ge 1-\sigma^2/4$, see, e.g., \cite{RTZ18}, but this bound is not sharp and in the range $\sigma \in [2,4)$ no quantitative estimate is known. This is in contrast with the much rougher setting of SLE where the sharp H\"older exponents for the capacity parametrization are known \cite{JVL11}. In this case, the randomness helps in the analysis, see below for further discussion.

Our first corollary is a quantitative estimate on the Hölder exponent $\alpha_\sigma$ for $\sigma \in [0, 4)$. 
\begin{cor}\label{mainresult1}
If $\gamma \in \Gamma_\sigma$ with $\sigma < 4$,  then for every $\epsilon > 0$, $\gamma$ is Hölder continuous in the capacity parametrization on $[\epsilon,1]$ with exponent $\alpha$ for each $\alpha$ satisfying
\begin{align}\label{L-sigma}
    \alpha < \frac{1}{1+L_\sigma^2},
\end{align}
where $L_\sigma$ is given by \eqref{eq:Lsigma}.
\end{cor}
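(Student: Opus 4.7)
The plan is to combine the cone estimate of Theorem~\ref{Cbounds} with the domain Markov property of the Loewner chain and a quantitative boundary distortion estimate for the conformal map $f_s$ at the slit tip.

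\textbf{Reduction.} Fix $\epsilon \le s < t \le 1$. The shifted driving $\tilde\lambda_u := \lambda_{s+u}-\lambda_s$ ($u\in[0,t-s]$) lies in $\Lambda_\sigma$ by translation invariance of the Lip-$\tfrac12$ seminorm, and generates a quasislit $\tilde\gamma\in\Gamma_\sigma$. The domain Markov property of the Loewner equation gives $\gamma(t)=f_s(\lambda_s+\tilde\gamma(t-s))$ and $\gamma(s)=f_s(\lambda_s)$, so bounding $|\gamma(t)-\gamma(s)|$ reduces to estimating $|f_s(\lambda_s+w)-f_s(\lambda_s)|$ with $w=\tilde\gamma(t-s)$. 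Applying Theorem~\ref{Cbounds}(i) to every initial segment of $\tilde\gamma$ places the entire trace $\tilde\gamma[0,t-s]$ inside the cone $\mathcal{K}_{L_\sigma}:=\{z\in\UH:|\textup{Re}\,z|\le L_\sigma \textup{Im}\,z\}$; combined with the standard half-plane capacity bound $\textup{Im}\,\tilde\gamma(u)\le 2\sqrt{u}$, this yields
\[
w\in \mathcal{K}_{L_\sigma},\qquad |w|\le 2\sqrt{(1+L_\sigma^2)(t-s)}.
\]

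\textbf{Boundary distortion at the tip.} The core step is to show that for $w\in\mathcal{K}_{L_\sigma}$ with $|w|$ small,
\[
|f_s(\lambda_s+w)-f_s(\lambda_s)| \le C_{\epsilon,\delta}\,|w|^{2/(1+L_\sigma^2)-\delta},\qquad \text{any }\delta>0,
\]
with an $\epsilon$-dependent constant that absorbs the global control on $f_s$ coming from $s\ge\epsilon$ and the hydrodynamic normalization. Equivalently, the inverse map $g_s=f_s^{-1}$ is Hölder of exponent $(1+L_\sigma^2)/2-o(1)$ at the slit tip $\gamma(s)$. The natural route is to integrate the Loewner identity for $\log f_s'$ along a radial segment from $\lambda_s$ into $\mathcal{K}_{L_\sigma}$, using Theorem~\ref{Cbounds}(i) applied to each intermediate sub-quasislit to maintain cone-type control on $z-\lambda_r$ for every $r\in[0,s]$; alternatively, an extremal-length argument tailored to the cone geometry at the tip would also produce the same exponent.

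\textbf{Conclusion and main obstacle.} Combining the two displays gives $|\gamma(t)-\gamma(s)|\le C'_{\epsilon,\delta}(t-s)^{1/(1+L_\sigma^2)-\delta/2}$; sending $\delta\downarrow 0$ yields Hölder continuity on $[\epsilon,1]$ with every exponent $\alpha<1/(1+L_\sigma^2)$. The hard part is the cone-sensitive distortion estimate: the existence of \emph{some} positive Hölder exponent at the tip is classical, since $H_s$ is a John domain because $\gamma[0,s]$ is a quasiarc; but pinning down the precise exponent $\cos^2(\arctan L_\sigma)=1/(1+L_\sigma^2)$ in terms of the cone slope $L_\sigma$ requires the careful integration along the cone outlined above, and this is where all the work goes.
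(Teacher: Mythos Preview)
Your domain-Markov reduction is a legitimate alternative to the paper's four-term triangle inequality and isolates the same target exponent. The difficulty is that the boundary distortion estimate is asserted rather than proved, and the route you sketch for it does not work as written. There is no useful forward-time ``Loewner identity for $\log f_s'$'' involving $z-\lambda_r$: differentiating the PDE gives
\[
\partial_r \log f_r'(z) = -\frac{f_r''(z)}{f_r'(z)}\,\frac{2}{z-\lambda_r} + \frac{2}{(z-\lambda_r)^2},
\]
and for a \emph{fixed} $z$ near $\lambda_s$ there is no cone control on $z-\lambda_r$ as $r$ ranges over $[0,s]$ (when $r$ is far from $s$, $|\textup{Re}(z-\lambda_r)|$ is of order $\sqrt{s}$ while $\textup{Im}\,z$ can be arbitrarily small). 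Applying Theorem~\ref{Cbounds}(i) to ``intermediate sub-quasislits'' controls $g_r(\gamma(s))-\lambda_r$, not $z-\lambda_r$ for the off-curve points you need.

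The object that does stay in the cone is the \emph{reverse flow} $\hat h_r(iy)=g_{s-r}\bigl(f_s(\lambda_s+iy)\bigr)-\lambda_{s-r}$, and the derivative admits the exact representation \eqref{eq:hder},
\[
|f_s'(\lambda_s+iy)| \;=\; \exp\Bigl\{\int_0^s \frac{W_r^2-1}{W_r^2+1}\,d\log Y_r\Bigr\},\qquad W_r=\frac{X_r(iy)}{Y_r(iy)}.
\]
The bound $|W_r(iy)|\le L_\sigma$ for all $r$ and all $y>0$ is exactly Proposition~\ref{4bound} (the full-strength form of Theorem~\ref{Cbounds}(i); the equivalence with the curve-tip formulation goes through the extended driver $\lambda^*_u=\lambda_{u\wedge s}$). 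This yields $|f_s'(\lambda_s+iy)|\le C_\epsilon\, y^{-(L_\sigma^2-1)/(L_\sigma^2+1)}$ immediately, and then your distortion estimate follows---without the spurious $-\delta$---by integrating along the vertical and handling the horizontal component of $w$ with Koebe distortion. The paper feeds this same reverse-flow derivative bound into a direct four-term splitting of $|\gamma(t+s)-\gamma(t)|$ rather than through the domain-Markov identity; either decomposition works, but the analytic core is the reverse-flow estimate, and that is what is missing from your write-up.
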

Our second corollary improves on the regularity estimate \eqref{L-sigma} (and the easy one $\alpha < 1-\sigma^2/4$), in the smaller range $\sigma \in (0,8/\pi)$.
\begin{cor}\label{mainresult2}
If $\gamma \in \Gamma_\sigma$ with $\sigma < \frac{8}{\pi}$, then for any $\epsilon > 0$, $\gamma$ is Hölder continuous in the capacity parametrization on $[\epsilon, 1]$ with exponent $\alpha$ for each $\alpha$ satisfying
\begin{align*}
    \alpha < 1-\frac{\pi^2 \sigma^2}{64}.
\end{align*}
\end{cor}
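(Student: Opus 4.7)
The plan is to mirror the proof of Corollary \ref{mainresult1}, but substitute the sharper cone bound coming from part $(ii)$ of Theorem \ref{Cbounds} in place of $L_\sigma$. The argument behind Corollary \ref{mainresult1} uses the cone bound as a black box—specifically, the implication that any uniform estimate $\CC_{t,\sigma}\le K$ (valid for $t>0$) yields Hölder regularity on $[\epsilon,1]$ with every exponent $\alpha<1/(1+K^2)$—so the same reasoning will give the sharper conclusion in the smaller range $\sigma<8/\pi$.

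First, I would apply Theorem \ref{Cbounds}(ii), which for $\sigma<8/\pi$ yields the uniform bound
$$\CC_{t,\sigma}\le K:=\frac{\pi\sigma}{\sqrt{64-\pi^2\sigma^2}},\qquad t>0.$$
A short computation then gives $1+K^2=64/(64-\pi^2\sigma^2)$, so $1/(1+K^2)=1-\pi^2\sigma^2/64$, matching the target exponent exactly. I would then re-run the cone-to-Hölder argument used for Corollary \ref{mainresult1}: the cone condition at the tip, together with the Loewner ODE and standard conformal distortion, forces $|f_t'(\lambda_t+iy)|$ to grow no faster than $y^{-K^2/(1+K^2)}$ as $y\downarrow 0$, uniformly for $t\in[\epsilon,1]$. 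Integrating this derivative estimate along the Loewner flow in the capacity parametrization then converts it into the desired Hölder modulus of continuity for $\gamma$.

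Since the mechanism is already in place from Corollary \ref{mainresult1}, no real obstacle is anticipated; the essential observation is that every step in that argument depends on the cone constant only through its numerical value, so that replacing $L_\sigma$ by $K$ requires no modification. One need only verify that $K$ is finite throughout the range $\sigma<8/\pi$, which is immediate from the definition of $K$.
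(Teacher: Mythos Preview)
Your approach is correct and is exactly what the paper does: Corollaries~\ref{mainresult1} and~\ref{mainresult2} are proved together, with the cone constant taken to be $m_\sigma=\min(L_\sigma,\pi\sigma/\sqrt{64-\pi^2\sigma^2})$ for $\sigma<8/\pi$, and the argument depends on this constant only through its numerical value, just as you observe. One minor slip in your sketch: the derivative bound obtained from \eqref{eq:hder} and the cone estimate is $|f_t'(\lambda_t+iy)|\le C\,y^{-(K^2-1)/(K^2+1)}$, not $y^{-K^2/(1+K^2)}$; integrating in $y$ and setting $s=y^2$ then yields the H\"older exponent $1/(1+K^2)=1-\pi^2\sigma^2/64$.
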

The proofs of these corollaries are the same, and given in Section~\ref{holder}.

Even though the cone estimate is sharp for small $\sigma$, we do not expect the exponents of Corollary~\ref{mainresult2} to be sharp, see Section~\ref{holder}. Here we simply remark that choosing $\lambda_t = \sigma \sqrt{1-t}$ produces a curve with a spiral at $t=1$ and a H\"older exponent of $1-\sigma^2/16$, so this provides an upper bound on the optimal exponent. Indeed, this can be seen from direct computation using the representation given in Proposition~3.3 of \cite{LMR10}, see, e.g., \cite{calle}. Rohde and Viklund have conjectured that this exponent is in fact the optimal one, but we currently have no proof of a matching lower bound. 

 \begin{rmk} Let $D(\sigma)$ be the maximal Hausdorff dimension of $\gamma \in \Gamma_\sigma$.  Corollary~\ref{mainresult2} immediately implies  \[D(\sigma) \le  1+ \pi^2 \sigma^2 / 64 + O(\sigma^4)\] as $\sigma \to 0$. 
 
 \end{rmk}

Theorem~\ref{Cbounds} also gives non-trivial bound on winding rates for the curve near the tip, when $\sigma$ is small, see Section~\ref{winding}.

In Section~\ref{smirnovseq} we make a few simple observations about the related problem of estimating the optimal Hölder regularity for quasislits achievable through reparametrization and the relation between the quasiconformal dilatiation parameter $k$ and the semi-norm $\sigma$. 
\subsection*{Acknowledgements}
LS acknowledges support from the Knut and Alice Wallenberg foundation. AS acknowledges support from the Gustafsson foundation. FV acknowledges support from the Knut and Alice Wallenberg foundation, the Swedish Research Council, and the Gustafsson foundation. 

We thank Yilin Wang for useful comments on an earlier version of our paper. FV would like to thank Steffen Rohde for many inspiring discussions about the topics covered in this paper.
\section{Preliminaries}\label{prel}
In this section we recall some basic properties of the Loewner differential equation and related objects which we will need in our analysis. Recall that $g_t = f_t^{-1}$ satisfies a family of ODEs,
 \begin{align}\label{eq:LDE}
     \partial_t g_t(z) = \frac{2}{g_t(z)-\lambda_t}, \quad g_0(z) = z.
 \end{align}
 The family $(g_t)$ is also called a Loewner chain.
We will frequently write expressions of the form $dZ_t = a(t)dF(t)$ for functions $a,F$, which are to be interpreted as
\begin{align*}
    \int_{t_0}^t dZ_s = \int_{t_0}^t a(s) dF(s).
\end{align*}
 Throughout, we let $\sigma < 4$ and assume that $\gamma$ is generated by $\lambda \in \Lambda_\sigma$ and that $(g_t)_{t\geq 0}$ is the corresponding Loewner chain. 
To work with $f_t$, which satisfies a PDE, it is convenient to use the reverse flow. 
For fixed $t>0$ and $s\in[0,t]$, let $\beta_s^t = \lambda_t-\lambda_{t-s}$. If $t$ is understood from the context, we instead write $\beta_s$. Then the following holds.
\begin{lem}
Fix $t>0$ and let $\hat{h}_s$ satisfy the equation
\begin{align}\label{eq:rLDE}
    d\hat{h}_s(z) = d\beta_s^t - \frac{2}{\hat{h}_s(z)}ds, \quad \hat{h}_0(z) = z,
\end{align}
for $s\in [0,t]$ and $z \in \UH$. Then
\begin{align*}
    \hat{h}_t(z) = f_t(\lambda_t+z).
\end{align*}
\end{lem}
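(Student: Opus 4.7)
The plan is to rewrite both sides of the claimed identity as the terminal value of a single flow, exploiting the time-reversal symmetry of the Loewner equation. Concretely, I would fix $t>0$ and introduce the auxiliary family
\[
h_s(z) := g_{t-s}\bigl(f_t(z)\bigr), \qquad s \in [0,t].
\]
Since $g_0 = \mathrm{id}$ and $g_t = f_t^{-1}$, this gives $h_0(z) = z$ and $h_t(z) = f_t(z)$, so $f_t$ appears naturally at time $t$. Differentiating in $s$ and applying \eqref{eq:LDE} to $g_u$ at $u = t-s$ (evaluated at the fixed point $f_t(z) \in \UH$) yields, by the chain rule,
\[
\partial_s h_s(z) \;=\; -\frac{2}{g_{t-s}(f_t(z)) - \lambda_{t-s}} \;=\; -\frac{2}{h_s(z) - \lambda_{t-s}}.
\]

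Next I would perform the shift that moves the singularity to $0$: set
\[
\tilde{h}_s(z) := h_s(z + \lambda_t) - \lambda_{t-s}, \qquad s \in [0,t].
\]
Then $\tilde{h}_0(z) = z$, and since $d(-\lambda_{t-s}) = d\beta_s^t$ by definition of $\beta^t$, the ODE for $h_s$ gives
\[
d\tilde{h}_s(z) \;=\; -\frac{2}{\tilde{h}_s(z)}\, ds \;+\; d\beta_s^t,
\]
which is exactly the reverse equation \eqref{eq:rLDE}. By uniqueness for \eqref{eq:rLDE} (see below), $\hat{h}_s = \tilde{h}_s$ for all $s \in [0,t]$. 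Evaluating at $s=t$ and using $\lambda_0 = 0$,
\[
\hat{h}_t(z) \;=\; \tilde{h}_t(z) \;=\; h_t(z+\lambda_t) - \lambda_0 \;=\; f_t(z+\lambda_t),
\]
which is the desired identity.

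The one point requiring care is uniqueness of \eqref{eq:rLDE}, since $\beta^t$ is only Lip-$\frac{1}{2}$. However, for $z \in \UH$ the solution stays in $\UH$ (indeed $\mathrm{Im}\,\hat{h}_s$ is non-decreasing, as $\mathrm{Im}(-2/\hat{h}_s) \geq 0$), so the vector field $-2/w$ is Lipschitz on the relevant region; applying a standard Picard argument to the conjugated equation for $\hat{h}_s(z) - \beta_s^t$ (with continuous driver and Lipschitz vector field) gives existence and uniqueness. Thus the substantive content of the lemma is just the bookkeeping of the time-reversal $u = t - s$ combined with the shift by $\lambda_t$, and I do not anticipate any deeper obstacle.
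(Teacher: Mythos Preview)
Your proposal is correct and follows essentially the same route as the paper: the paper defines $\hat{h}_s(z) \coloneqq g_{t-s}(f_t(z+\lambda_t))-\lambda_{t-s}$ directly and checks it satisfies \eqref{eq:rLDE}, which is exactly your $\tilde{h}_s$ after composing your two steps. Your additional remarks on uniqueness (via the imaginary part staying bounded away from $0$) simply make explicit a point the paper leaves implicit.
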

\begin{proof}
Use \eqref{eq:LDE} to see that the family of maps $\hat{h}_s$, defined by $\hat{h}_s(z) \coloneqq g_{t-s}(f_t(z+\lambda_t))-\lambda_{t-s}$ for $0 \leq s \leq t$, satisfies \eqref{eq:rLDE} and $\hat{h}_t(z) = f_t(\lambda_t+z)$.
\end{proof}
\begin{rmk}
Note that $\hat{h}_s(z)$ depends on $t$: if we let $\tilde{h}_s$ solve \eqref{eq:rLDE} with driving function $\lambda$ but for $t_1 \neq t$ in the same way, then it does not hold in general that $\hat{h}_s(z) = \tilde{h}_s(z)$ for $s < \min(t,t_1)$.
\end{rmk}
If we write $\hat{h}_s(z) = X_s(z) + iY_s(z)$ for $z = x+iy \in \UH$, then \eqref{eq:rLDE} is equivalent to the equations
\begin{align}
    dX_s &= d\beta_s - \frac{2X_s}{X_s^2+Y_s^2} ds, \quad X_0 = x, \label{eq:Xeq} \\
    dY_s &= \frac{2Y_s}{X_s^2 + Y_s^2}, \quad Y_0 = y. \label{eq:Yeq}
\end{align}
Next, we recall the following from \cite{RTZ18}.
\begin{lem}[Theorem 3.1 and Lemma 2.1 of \cite{RTZ18}]
Suppose $\sigma < 4$. There exists a constant $c_\sigma > 0$, depending only on $\sigma$, such that for all $0 < y \le 1$ and $s \ge 0$,
\begin{align}\label{eq:Ybounds}
    \sqrt{y^2 + c_\sigma s} \leq Y_s(iy) \leq \sqrt{y^2 + 4s}.
\end{align}
Moreover,
\begin{align}\label{eq:Xbounds}
    |X_s(iy)| \leq \sup_{0 \leq r \leq s} |\beta_s - \beta_r|.
\end{align}
\end{lem}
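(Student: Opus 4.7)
The plan is to prove the three inequalities in sequence; the upper bound on $Y_s$ and the bound on $|X_s|$ are short, while the lower bound on $Y_s$ is where the constant $c_\sigma$ (and the restriction $\sigma < 4$) enter essentially.

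First, the upper bound on $Y_s$: differentiating $Y_s^2$ using \eqref{eq:Yeq} gives $\tfrac{d}{ds} Y_s^2 = 4 Y_s^2/(X_s^2 + Y_s^2) \le 4$, and integration yields $Y_s^2 \le y^2 + 4s$.

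Second, the bound on $|X_s|$ exploits that the drift $-2X_s/(X_s^2+Y_s^2)$ in \eqref{eq:Xeq} always points toward $0$. Set $\tau = \sup\{r \in [0,s] : X_r(iy) = 0\}$; this is well-defined since $X_0(iy) = 0$ and $r \mapsto X_r$ is continuous. Either $\tau = s$ (and the bound is trivial) or $X_r$ has constant sign on $(\tau, s]$. In the latter case, assume without loss of generality $X_r > 0$, so that the drift integral contributes non-positively on $(\tau, s]$; integrating \eqref{eq:Xeq} from $\tau$ to $s$ then gives $X_s \le \beta_s - \beta_\tau \le \sup_{0 \le r \le s} |\beta_s - \beta_r|$, and the case $X_r < 0$ is symmetric.

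Third, for the lower bound on $Y_s$: combining the previous bound with the Lip-$\tfrac{1}{2}$ assumption on $\lambda$ gives $|X_s| \le \sigma\sqrt{s}$, whence
\begin{equation*}
\frac{d}{ds} Y_s^2 \ge \frac{4 Y_s^2}{\sigma^2 s + Y_s^2}.
\end{equation*}
For $\sigma < 2$ the ansatz $Y_s^2 \ge y^2 + (4-\sigma^2) s$ is preserved by this differential inequality, yielding $c_\sigma = 4-\sigma^2$. The main difficulty is the range $\sigma \in [2, 4)$, where this direct comparison breaks down. My strategy here is to pass to scale-invariant coordinates $U_s = X_s/\sqrt{s}$, $V_s = Y_s/\sqrt{s}$ and compare with the self-similar solution corresponding to the extremal driver $\lambda_t = \sigma\sqrt{t}$, for which $(U_s, V_s)$ is stationary in $s$. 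Since $\sigma < 4$ is exactly the threshold that makes this extremal solution generate a genuine slit line with $V > 0$, one expects a uniform positive lower bound on $V_s^2$ and hence $Y_s^2 \ge c_\sigma s$ for some $c_\sigma > 0$ depending only on $\sigma$. Making this comparison rigorous for an arbitrary $\lambda \in \Lambda_\sigma$ rather than the extremal driver is the central obstacle.
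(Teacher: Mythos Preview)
The paper does not give its own proof of this lemma; it is quoted verbatim from \cite{RTZ18} and used as a black box. So there is no argument in the paper to compare against directly. That said, your proposal can still be evaluated on its own merits.

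Your proofs of the upper bound $Y_s^2 \le y^2 + 4s$ and of \eqref{eq:Xbounds} are correct and are the standard arguments. Your treatment of the lower bound for $\sigma < 2$ is also correct: the ansatz $Y_s^2 \ge y^2 + (4-\sigma^2)s$ is indeed preserved by the differential inequality, since $F(s,v) = 4v/(\sigma^2 s + v)$ is increasing in $v$ and $F(s, y^2+(4-\sigma^2)s) \ge 4-\sigma^2$ reduces to $4 \ge 4-\sigma^2$.

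The genuine gap is the range $\sigma \in [2,4)$. You openly state that ``making this comparison rigorous \ldots\ is the central obstacle'' and do not resolve it. Passing to $(U_s,V_s)=(X_s/\sqrt{s},\,Y_s/\sqrt{s})$ and invoking the self-similar solution for $\lambda_t=\sigma\sqrt{t}$ is a reasonable heuristic, but no comparison principle of the Gr\"onwall type applies directly: the driver enters the $U$-equation additively and the self-similar solution is only a fixed point, not an upper or lower barrier for general $\lambda\in\Lambda_\sigma$. So as written the argument stops short of a proof for $\sigma \ge 2$.

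The cleaner route is to note that the lower $Y$-bound is \emph{equivalent} to a uniform bound on $W_s = X_s/Y_s$: from \eqref{eq:Yeq} one has $\tfrac{d}{ds}Y_s^2 = 4/(W_s^2+1)$, so any estimate $|W_s|\le C_\sigma$ valid for all $s$ immediately gives $c_\sigma = 4/(C_\sigma^2+1)$. This is precisely what Theorem~3.1 of \cite{RTZ18} provides for all $\sigma<4$, and it is also what the present paper sharpens in Propositions~\ref{8pibound} and \ref{4bound}. Your scale-invariant idea is morally the same---it aims at a cone bound---but phrasing it through $X_s/Y_s$ rather than $X_s/\sqrt{s}$ decouples the problem from the driver in a way that makes the comparison tractable (cf.\ the integral identity \eqref{eq:intprod} and the argument in Section~\ref{part1}).
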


In order to estimate $|\textup{Re} (\gamma(t))|/\textup{Im} (\gamma(t))$ we will work with the process
\begin{align*}
    W_s(z) = \frac{X_s(z)}{Y_s(z)}
\end{align*}
and note that by \eqref{eq:gamma}
\begin{align}\label{eq:Wcone}
    \frac{\textup{Re}(\gamma(t))}{\textup{Im}(\gamma(t))} = \lim_{y \rightarrow 0} W_t(iy).
\end{align}
Moreover, we can rewrite \eqref{eq:Xeq} as
\begin{align}
    dX_s = d\beta_s - \frac{2W_s^2}{W_s^2+1} \frac{1}{X_s}ds, \quad X_0 = x. \label{eq:XWeq}
\end{align}

When estimating the Hölder exponents and the winding rates for the curves, we need to estimate $|f_t'(\lambda_t+iy)|$ and $\arg f_t'(\lambda_t+iy)$, where differentiation is with respect to the spatial variable. Since $f_t'(\lambda_t+z) = \hat{h}_t'(z)$ the following formulas for $\hat{h}_s'(z)$ will be useful. For $s\in [0,t]$,
\begin{align}
    |\hat{h}_s'(z)| &= \exp\left\{ \int_0^s \frac{2(X_r^2-Y_r^2)}{(X_r^2+Y_r^2)^2}dr\right\} = \exp\left\{ \int_0^s \frac{X_r^2-Y_r^2}{X_r^2+Y_r^2}d\log Y_r\right\} \nonumber \\
    &= \exp\left\{ \int_0^s \frac{W_r^2-1}{W_r^2+1} d\log Y_r \right\}, \label{eq:hder} \\
    \arg \hat{h}_s'(z) &= -4\int_0^s \frac{X_r Y_r}{(X_r^2+Y_r^2)^2} dr = -2 \int_0^s \frac{X_r Y_r}{X_r^2 + Y_r^2} d\log Y_r \nonumber \\
    &= -2 \int_0^s \frac{W_r}{W_r^2+1} d\log Y_r. \label{eq:harg}
\end{align}
\subsection{Time reparametrization}\label{time}
By \eqref{eq:Yeq} and \eqref{eq:Ybounds}, it follows that $Y_s:[0,\infty) \rightarrow [y,\infty)$ is a strictly increasing, continuous function and hence that the function $(Y_s^2-y^2):[0,\infty) \rightarrow [0,\infty)$ is a bijection. We denote by $\theta_s$ its inverse function, that is, $Y_{\theta_s}^2-y^2 = s$ and denote the reparametrized functions by $\tilde{X} = X_{\theta_s}$, $\tilde{Y}_s = Y_{\theta_s} = \sqrt{y^2 + s}$ and $\tilde{W}_s = \tilde{X}_s/\sqrt{y^2+s}$. Then we have
\begin{align}
    d\tilde{X}_s &= d\beta_{\theta_s} - \frac{\tilde{X}_s}{2(y^2+s)} ds, \quad \tilde{X}_0 = x, \label{eq:Xtilde} \\
    d\theta_s &= \frac{1}{4}\left( \frac{\tilde{X}_s^2}{y^2+s}+1\right) ds, \quad \theta_0 = 0. \label{eq:theta}
\end{align}
The advantage of this reparametrization is that it only leaves us with one unknown: $\tilde{X}$. This makes it easier to compare two solutions with different driving functions and will play a crucial role in the proof of Theorem \ref{Cbounds}.

\section{A bound in the range $0 \le \sigma < \frac{8}{\pi}$}\label{part2}
This section is devoted to the proof of part \textit{(ii)} of Theorem \ref{Cbounds}, and we will prove the following result, which will imply it.
\begin{prop}\label{8pibound}
Let $\lambda \in \Lambda_\sigma$ with $\sigma < \frac{8}{\pi}$, fix some $t>0$ and define $\hat{h}_s(iy) = X_s(iy)+iY_s(iy)$ as the solution to \eqref{eq:rLDE}. Then, for all $s \in [0,t]$ and $y>0$, we have
\begin{align}\label{eq:8pibound}
    |W_s(iy)| = \left| \frac{X_s(iy)}{Y_s(iy)}\right| \leq \frac{\pi \sigma}{\sqrt{64-\pi^2\sigma^2}}.
\end{align}
\end{prop}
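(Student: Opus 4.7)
The plan is to exploit the time-reparametrized system of Section~\ref{time}. Since \eqref{eq:Xtilde} is first-order linear in $\tilde X_s$ with integrating factor $\sqrt{y^2+s}$, variation of parameters (using $\tilde X_0 = 0$) yields the integral representation
\begin{equation*}
\tilde W_s \;=\; \frac{1}{y^2+s}\int_0^s \sqrt{y^2+u}\; d\beta_{\theta_u}.
\end{equation*}
Integrating by parts (using $\beta_{\theta_0}=0$) and then writing $\beta_{\theta_u} = \beta_{\theta_s} - (\beta_{\theta_s}-\beta_{\theta_u})$ in the remaining integral transforms this into the key identity
\begin{equation*}
\tilde W_s \;=\; \frac{y\,\beta_{\theta_s}}{y^2+s} \;+\; \frac{1}{2(y^2+s)}\int_0^s \frac{\beta_{\theta_s}-\beta_{\theta_u}}{\sqrt{y^2+u}}\,du,
\end{equation*}
which depends on $\beta$ only through its increments and is thus tailored to the Lip-$\frac{1}{2}$ hypothesis.

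I would then run a continuity/bootstrap argument to show $|\tilde W_s| \leq M$ with $M := \pi\sigma/\sqrt{64-\pi^2\sigma^2}$, well-defined because $\sigma<8/\pi$. Since $\tilde W_0 = 0$ and $\tilde W$ is continuous, it suffices to show that whenever $|\tilde W_v| \leq M$ on $[0,s]$ one in fact has $|\tilde W_s| < M$. Under this hypothesis, \eqref{eq:theta} yields $\theta_s - \theta_u \leq \tfrac{1}{4}(M^2+1)(s-u)$, so the Lip-$\frac{1}{2}$ property of $\lambda$ propagates as
\begin{equation*}
|\beta_{\theta_s}| \leq K\sqrt{s}, \qquad |\beta_{\theta_s}-\beta_{\theta_u}| \leq K\sqrt{s-u}, \qquad K := \tfrac{1}{2}\sigma\sqrt{M^2+1}.
\end{equation*}
Inserting these bounds into the identity and computing $\int_0^s \sqrt{(s-u)/(y^2+u)}\,du$ by the substitution $y^2+u=(y^2+s)\sin^2\phi$ leads, after routine algebra, to
\begin{equation*}
|\tilde W_s| \;\leq\; \tfrac{K}{2}\bigl(\phi_* + \sin\phi_*\cos\phi_*\bigr), \qquad \phi_* := \arcsin\!\sqrt{s/(y^2+s)} \in [0,\pi/2).
\end{equation*}
Since $\phi \mapsto \phi + \sin\phi\cos\phi$ is strictly increasing on $[0,\pi/2]$ with supremum $\pi/2$, this gives $|\tilde W_s| < \pi K/4$. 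The fixed-point relation $\pi K/4 = M$ has precisely the claimed unique positive solution $M = \pi\sigma/\sqrt{64-\pi^2\sigma^2}$ in the range $\sigma<8/\pi$, so the strict inequality closes the bootstrap. Ranging $s$ over $[0, Y_t(iy)^2-y^2]$ and recalling $\tilde W_s = W_{\theta_s}$ then transfers the bound to $|W_s(iy)|$ on $[0,t]$.

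The main obstacle is the passage from the first to the second representation of $\tilde W_s$. A naive application of the pointwise bound $|\beta_{\theta_u}| \leq \sigma\sqrt{\theta_u}$ to the first integral produces only $|\tilde W_s| \leq 3K/2$ as $y\downarrow 0$, corresponding to the condition $\pi \geq 6$, which of course fails. Subtracting the constant $\beta_{\theta_s}$ under the integral replaces the pointwise bound by an increment bound, and the resulting integral admits the exact trigonometric evaluation above; this is precisely what generates the sharp constant $\pi/2$, and hence the factor $\pi/8$ in the target bound.
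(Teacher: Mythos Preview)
Your argument is correct and follows essentially the same route as the paper: you arrive at the identical key identity for $\tilde W_s$ (the paper obtains it from the product formula $d(X_sY_s)=Y_s\,d\beta_s$ followed by integration by parts, you via the integrating factor for \eqref{eq:Xtilde}, which is the same computation), then bound increments of $\beta\circ\theta$ through \eqref{eq:theta}, evaluate the same integral $\int_0^s\sqrt{(s-u)/(y^2+u)}\,du$, and close with the fixed-point relation $M=\pi K/4$. The only cosmetic difference is that the paper works directly with $R_s=\sup_{r\le s}|\tilde W_r|$ and a non-strict inequality rather than an open--closed bootstrap, and keeps the two terms separate (invoking $I(y/\sqrt{y^2+s})\ge y\sqrt{s}/(y^2+s)$) where you combine them into the single monotone expression $\tfrac{K}{2}(\phi_*+\sin\phi_*\cos\phi_*)$; your packaging is arguably a bit cleaner.
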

\begin{proof}
Using the product formula together with \eqref{eq:Xeq} and \eqref{eq:Yeq} to see that
\begin{align}\label{eq:prodform}
    d(X_s Y_s) = X_s dY_s + Y_s dX_s = Y_s d\beta_s.
\end{align}
The validity of \eqref{eq:prodform}, interpreted as Riemann-Stieltjes integrals, follows since the continuity and monotonicity of $Y_s$ implies that it is of bounded variation. By partial integration, we have
\begin{align}\label{eq:intprod}
    X_s Y_s = \int_0^s Y_r d\beta_r = y\beta_s + \int_0^s (\beta_s - \beta_r) dY_r,
\end{align}
since $X_0 = 0$. Employing the time reparametrization of Section \ref{time}, using $d\tilde{Y}_r = dr/(2 \sqrt{y^2+r})$, \eqref{eq:intprod} becomes
\begin{align}
    \tilde{X}_s \sqrt{y^2+s} = y \beta_{\theta_s} + \int_0^s (\beta_{\theta_s}-\beta_{\theta_r}) \frac{dr}{2\sqrt{y^2+r}}.
\end{align}
In order to prove the result, we shall bound $\tilde{W}_s = \tilde{X}_s/\tilde{Y}_s = \tilde{X}_s/\sqrt{y^2+s}$. We have by \eqref{eq:intprod}
\begin{align}\label{eq:W2}
    \tilde{W}_s = \frac{y \beta_{\theta_s}}{y^2+s} + \frac{1}{y^2+s}\int_0^s (\beta_{\theta_s}-\beta_{\theta_r}) \frac{dr}{2\sqrt{y^2+r}}
\end{align}
We write $R_s = \sup_{r \leq s} | \tilde{W}_s|$ and note that by \eqref{eq:theta} and $\| \beta \|_\frac{1}{2} \leq \sigma$,
\begin{align}\label{eq:btbound}
    |\beta_{\theta_s}| \leq \sigma |\theta_s|^{\frac{1}{2}} \leq \frac{\sigma}{2} \sqrt{\int_0^s (\tilde{W}_u^2 + 1) du} \leq \frac{\sigma\sqrt{s}}{2} \sqrt{R_s^2+1}
\end{align}
and
\begin{align}\label{eq:btintbound}
    \left| \int_0^s (\beta_{\theta_s}-\beta_{\theta_r}) \frac{dr}{2\sqrt{y^2+r}}\right| &\leq \int_0^s \sigma \sqrt{\theta_s-\theta_r} \frac{dr}{2\sqrt{y^2+r}} \nonumber \\
    &\leq \frac{\sigma}{2} \int_0^s \sqrt{\int_r^s (\tilde{W}_u^2 + 1) du} \, \frac{dr}{2\sqrt{y^2+r}} \nonumber \\
    &\leq \frac{\sigma}{2} \sqrt{R_s^2+1} \int_0^s \sqrt{s-r} \, \frac{dr}{2\sqrt{y^2+r}}.
\end{align}
Next, we observe that
\begin{align}\label{eq:sqint}
    \int_0^s \sqrt{s-r} \,\frac{dr}{2\sqrt{y^2+r}} &= (y^2+s) \int_{\frac{y}{\sqrt{y^2+s}}}^1 \sqrt{1-u^2} \, du \nonumber \\
    &= (y^2+s)\left(\frac{\pi}{4} - \int_0^{\frac{y}{\sqrt{y^2+s}}} \sqrt{1-u^2} \, du\right) \nonumber \\
    &= (y^2+s)\left(\frac{\pi}{4} - I\left(\frac{y}{\sqrt{y^2+s}}\right)\right),
\end{align}
where $I(x) = \frac{1}{2} \arcsin(x) + \frac{1}{2} x\sqrt{1-x^2}$. Putting together equations \eqref{eq:W2}-\eqref{eq:sqint}, we get
\begin{align*}
    |\tilde{W}_s| &\leq \frac{y |\beta_{\theta_s}|}{y^2+s} + \frac{1}{y^2+s}\left| \int_0^s (\beta_{\theta_s}-\beta_{\theta_r}) \frac{dr}{2\sqrt{y^2+r}}\right| \\
    &\leq \frac{\sigma \sqrt{R_s^2+1}}{2} \left( \frac{y\sqrt{s}}{y^2+s} + \frac{\pi}{4} - I\left(\frac{y}{\sqrt{y^2+s}}\right) \right).
\end{align*}
It can be checked that $I(y/\sqrt{y^2+s}) \geq y \sqrt{s}/(y^2+s)$, and hence
\begin{align*}
    |\tilde{W}_s| \leq \frac{\pi\sigma\sqrt{R_s^2+1}}{8}.
\end{align*}
This implies that $R_s \leq \pi\sigma\sqrt{R_s^2+1}/8$ and hence that
\begin{align*}
    R_s \leq \frac{\pi \sigma}{\sqrt{64-\pi^2\sigma^2}},
\end{align*}
which proves the proposition.
\end{proof}
Now part \textit{(ii)} of Theorem \ref{Cbounds} follows by \eqref{eq:Wcone}.
\begin{rmk}
At first it might seem that the statement $|W_s(iy)| \leq K$ for $y > 0$ is stronger than $\CC_{t,\sigma} \leq K$, but in fact, they are equivalent. Indeed, the curve $(f_t(\lambda_t+iy), y>0)$ is the curve generated by the driving function $\lambda_s^* = \lambda_{s \wedge t}$, which is clearly a Lip-$\frac{1}{2}$ function. Thus, the condition $\CC_{t,\sigma} \leq K$ implies that $|W_s(iy)| \leq K$. 
\end{rmk}

\section{A bound in the range $0 \le \sigma<4$}\label{part1}
This section is devoted to the proof of the following proposition, which will give part \textit{(ii)} of Theorem \ref{Cbounds}, as in the previous section.
\begin{prop}\label{4bound}
Let $\lambda \in \Lambda_\sigma$ with $\sigma < 4$, fix some $t>0$ and define $\hat{h}_s(iy) = X_s(iy)+iY_s(iy)$ as the solution to \eqref{eq:rLDE}. Then, for all $s \in [0,t]$ and $y>0$, we have
\begin{align}\label{eq:4bound}
    |W_s(iy)| = \left| \frac{X_s(iy)}{Y_s(iy)}\right| \leq L_\sigma.
\end{align}
\end{prop}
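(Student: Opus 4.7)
My plan is to follow the skeleton of the proof of Proposition~\ref{8pibound}: work with the integral representation of $(y^2+s)\tilde W_s$ derived there, combined with the Lip-$\frac{1}{2}$ bound $|\beta_{\theta_s}-\beta_{\theta_r}|\le\sigma\sqrt{\theta_s-\theta_r}$ and the identity $\theta_s-\theta_r=\frac14\int_r^s(\tilde W_u^2+1)\,du$. The obstruction in extending past $\sigma = 8/\pi$ is that replacing $\tilde W_u^2$ by the pointwise sup $R^2$ under the square root produces an irreducible factor of $\pi/4$, which caps the applicable range. One needs a refined estimate that replaces this $\pi/4$ by something $\sigma$-dependent.

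To reach $\sigma<4$, I would introduce a scale parameter $p>0$ and split the reverse flow at an intermediate time $s_0$ defined by $(y^2+s_0)=(y^2+s)e^{-2p}$ (assuming $s\ge y^2(e^{2p}-1)$; the opposite regime is handled separately by a short-time argument using $\tilde W_0=0$). Writing the Loewner flow in Duhamel form over $[s_0,s]$,
\begin{align*}
(y^2+s)\tilde W_s &= (y^2+s_0)\tilde W_{s_0} + \sqrt{y^2+s_0}\,(\beta_{\theta_s}-\beta_{\theta_{s_0}})\\
&\quad + \int_{s_0}^s(\beta_{\theta_s}-\beta_{\theta_r})\,\frac{dr}{2\sqrt{y^2+r}},
\end{align*}
and using the inductive hypothesis $|\tilde W_{s_0}|\le L$ on the first term together with Lip-$\frac12$ estimates on the other two, the goal is to obtain an inequality of the form
\[
L \;\le\; L\,e^{-2p} + \sigma\sqrt{L^2+1}\,\Psi(p),
\]
for an explicit elementary $\Psi(p)$. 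Solving the resulting self-consistency for $L$ and imposing stationarity in $p$ is intended to produce the transcendental equation \eqref{eq:psigma}, with the corresponding value of $L$ equal to $L_\sigma$ given in \eqref{eq:Lsigma}. A standard contradiction argument based on $T:=\inf\{s:|\tilde W_s|>L_\sigma\}$ turns the self-consistency into a genuine bound; sending $y\downarrow 0$ via \eqref{eq:Wcone} then delivers the claim.

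The main obstacle, in my view, is matching the specific form of $L_\sigma$. A naive single-scale split recovers only the $\pi/8$ constant of Proposition~\ref{8pibound} in the limit $p\to\infty$ and does not directly produce the $e^p$ factor present in $L_\sigma$. I expect one must either iterate the split on geometrically spaced scales $s_n=(y^2+s)e^{-2np}-y^2$ so that geometric sums $\sum_n e^{-2np}$ generate the required exponential, or sharpen the boundary estimate by using integrating-factor manipulations of the ODE $d\tilde W_s + \tilde W_s\,ds/(y^2+s) = d\beta_{\theta_s}/\sqrt{y^2+s}$ directly rather than the cheap bound $|\tilde W_{s_0}|\le L$. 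A minor technical point is that the split requires $s_0\ge 0$ (imposing $s\ge y^2(e^{2p}-1)$); continuity from $\tilde W_0=0$ together with a short-time estimate handles the complementary regime, so the final bound is uniform in $y$ and one may pass to the limit $y\downarrow 0$ without losing the constant.
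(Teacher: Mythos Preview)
Your proposal is not a proof but a plan with acknowledged gaps, and those gaps are genuine. Carrying out your single-scale Duhamel split and optimizing gives
\[
\frac{L}{\sqrt{L^2+1}}\;\le\;\frac{\sigma}{4}\cdot\frac{e^{-p}\sqrt{1-e^{-2p}}+\arccos(e^{-p})}{1-e^{-2p}},
\]
and a direct computation shows the right-hand side is strictly increasing in $q=e^{-p}$ on $(0,1)$ with infimum $\pi\sigma/8$ at $p\to\infty$. Thus the split can never improve on Proposition~\ref{8pibound}, exactly as you suspected. Your suggested fixes are speculative: iterating the split on geometric scales with the same uniform bound $|\tilde W|\le L$ at every step telescopes back to the same self-consistency in the limit, and there is no indication that an integrating-factor manipulation of $d\tilde W$ would generate the Lambert-$\mathcal W$ structure that defines $L_\sigma$.

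The paper's argument is completely different and does not use the integral representation of $X_sY_s$ at all. Fix the threshold $K_\sigma=\sigma/\sqrt{16-\sigma^2}$ and work on an excursion of $W$ above $K_\sigma$: there $W^2/(W^2+1)\ge\sigma^2/16$, so \eqref{eq:XWeq} yields the differential inequality
\[
\widehat X_s\;\le\;\widehat X_0+\sigma\sqrt{s}-\frac{\sigma^2}{8}\int_0^s\frac{dr}{\widehat X_r}.
\]
A Gr\"onwall comparison bounds $\widehat X_s$ by the explicit solution $Z_s=\tfrac{\sigma\sqrt{s}}{2}+\widehat X_0\exp\mathcal W\!\bigl(\tfrac{\sigma\sqrt{s}}{2\widehat X_0}\bigr)$ of $dZ=\sigma\,d\sqrt{s}-\tfrac{\sigma^2}{8Z}\,ds$, and this is precisely where the Lambert $\mathcal W$ function enters. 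A second Gr\"onwall comparison, this time for the time change $\theta_s$, produces a bound on $\widehat X_{\theta_s}^2/\widehat Y_{\theta_s}^2$ depending on a free parameter $\kappa\in(\sigma^2/4,4)$; optimizing over $\kappa$ gives exactly $L_\sigma$ via \eqref{eq:psigma}--\eqref{eq:Lsigma}. The missing idea in your plan is to restrict attention to an excursion above a well-chosen level so that the drift in the $X$-equation is uniformly bounded below, and then to compare the $X$-ODE itself (rather than the $XY$-integral) with an explicitly solvable model.
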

The proof of Proposition \ref{4bound} requires more work than that of Proposition \ref{8pibound}. In proving this, we shall employ the time-change of Section \ref{time} and bound $X_s$ and $\theta_s$ by comparing them to some properly chosen functions, using a version of the Grönwall inequality, which we now state and prove.

\begin{prop}[Grönwall's inequlity]\label{GW}
Let $F(t,x)$ be a bounded, continuous function, which is increasing and continuously differentiable in the variable $x$ and $F'(t,x) = \partial_x F(x)$ is bounded. Let $U$ be a continuous function on an interval $[0,T]$ such that
\begin{align*}
    U_t \leq \int_0^t F(r,U_r) dr
\end{align*}
for every $t \in [0,T]$. If $V$ is a continuous function defined on $[0,T]$, satisfying
\begin{align*}
    V_t = \int_0^t F(r,V_r) dr
\end{align*}
for $t \in [0,T]$, then $U_t \leq V_t$ for all $t \in [0,T]$.
\end{prop}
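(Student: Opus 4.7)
The plan is a perturbation--comparison argument. For $\varepsilon > 0$, I would introduce the auxiliary integral equation
\begin{equation*}
V^\varepsilon_t = \varepsilon t + \int_0^t F(r, V^\varepsilon_r)\, dr, \qquad t \in [0, T].
\end{equation*}
Since $F$ is bounded and, by the boundedness of $\partial_x F$, uniformly Lipschitz in $x$ with constant $L$, the Picard--Lindel\"of theorem provides a unique continuous solution $V^\varepsilon$ on $[0, T]$. The goal is then to establish $U_t \leq V^\varepsilon_t$ for every $t \in [0, T]$ and every $\varepsilon > 0$, and to let $\varepsilon \downarrow 0$.

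For the strict comparison I would set $t_1 = \inf\{t \in [0, T] : U_t > V^\varepsilon_t\}$, with the convention that the infimum of the empty set is $T$. Assuming for the moment that $t_1 > 0$ (addressed below), if $t_1 \in (0, T]$ then continuity forces $U_{t_1} = V^\varepsilon_{t_1}$ with $U_r \leq V^\varepsilon_r$ for $r \leq t_1$, and the monotonicity of $F$ in its second argument yields
\begin{equation*}
U_{t_1} \leq \int_0^{t_1} F(r, U_r)\, dr \leq \int_0^{t_1} F(r, V^\varepsilon_r)\, dr = V^\varepsilon_{t_1} - \varepsilon t_1 < V^\varepsilon_{t_1},
\end{equation*}
which contradicts $U_{t_1} = V^\varepsilon_{t_1}$. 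Hence $U_t \leq V^\varepsilon_t$ on all of $[0, T]$.

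To pass to the limit, I would write $V^\varepsilon_t - V_t = \varepsilon t + \int_0^t [F(r, V^\varepsilon_r) - F(r, V_r)]\, dr$, use the Lipschitz bound to get $|V^\varepsilon_t - V_t| \leq \varepsilon T + L \int_0^t |V^\varepsilon_r - V_r|\, dr$, and invoke the classical scalar Gr\"onwall lemma (a direct integrating-factor calculation, with no comparison principle involved) to conclude $|V^\varepsilon_t - V_t| \leq \varepsilon T e^{LT}$. Thus $V^\varepsilon \to V$ uniformly on $[0, T]$ as $\varepsilon \downarrow 0$, and the desired bound $U_t \leq V_t$ follows.

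The delicate point I anticipate is ruling out $t_1 = 0$. We always have $U_0 \leq 0 = V^\varepsilon_0$, and if $U_0 < 0$ strictly then $V^\varepsilon_t - U_t > 0$ in a neighborhood of the origin by continuity. In the borderline case $U_0 = 0$, both curves start at the same point and one must argue quantitatively; the inequality
\begin{equation*}
V^\varepsilon_t - U_t \geq \varepsilon t + \int_0^t [F(r, V^\varepsilon_r) - F(r, U_r)]\, dr \geq \varepsilon t - L\, t \max_{0 \leq r \leq t}|V^\varepsilon_r - U_r|
\end{equation*}
combined with the continuity of $V^\varepsilon - U$ at $0$ shows that $V^\varepsilon_t - U_t \geq \varepsilon t/2 > 0$ for all sufficiently small $t > 0$, so $t_1 > 0$ in either case. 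This is precisely the step where the full strength of the uniform boundedness of $\partial_x F$ is used.
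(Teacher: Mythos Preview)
Your argument is correct, but it takes a genuinely different route from the paper. The paper proceeds directly: writing $w_t = U_t - V_t$, it uses the fundamental theorem of calculus in the $x$-variable to obtain
\[
w_t \le \int_0^t \Bigl(\int_0^1 F'(r, pU_r + (1-p)V_r)\,dp\Bigr) w_r\,dr,
\]
where the bracketed coefficient is nonnegative (since $F$ is increasing in $x$) and bounded. One is then immediately in the setting of the classical linear Gr\"onwall lemma with zero initial value, yielding $w_t \le 0$.

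Your approach instead introduces a strict supersolution $V^\varepsilon$, proves $U \le V^\varepsilon$ by a first-crossing-time contradiction, and then lets $\varepsilon \downarrow 0$. This is the classical ODE comparison technique and is perfectly valid. It is somewhat longer, and you end up invoking the Lipschitz bound (hence the boundedness of $\partial_x F$) in three separate places: to get existence of $V^\varepsilon$, to rule out $t_1 = 0$, and to prove $V^\varepsilon \to V$. The paper's linearization compresses all of this into a single application of the scalar Gr\"onwall lemma. On the other hand, your core crossing argument at $t_1 > 0$ uses only the monotonicity of $F$ in $x$, which makes transparent exactly where that hypothesis enters; this is the same feature the paper highlights in its closing sentence. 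A small stylistic point: your convention that the infimum of the empty set is $T$ is unnecessary and slightly muddles the case analysis---if the set $\{t: U_t > V^\varepsilon_t\}$ is empty you are already done, so you may as well assume it is nonempty before defining $t_1$.
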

\begin{proof}
We have that
\begin{align*}
    U_t - V_t &\leq \int_0^t (F(r,U_r)-F(r,V_r)) dr \\
    &= \int_0^t \left\{ \int_0^1 F'(r,pU_r + (1-p)V_r) dp \right\} (U_r-V_r) dr \\
    &\leq C \int_0^t (U_r - V_r) dr,
\end{align*}
and hence the claim follows from the standard Grönwall inequality since $U_0-V_0 \le 0$. This also uses the assumption that $F(r, \cdot)$ is increasing.
\end{proof}
\begin{rmk}
We can actually relax the assumptions on $F$ somewhat. The following follows by the very same proof. Let $t_U^\delta = \inf\{t\in[0,T]: U_t = \delta \}$ and $t_V^\delta = \inf\{ t \in [0,T]: V_t = \delta\}$. Assume that $F(t,x)$ is increasing in $x$ and continuously differentiable in $x$ with $F'(t,x)$ bounded on $[\epsilon,\infty)$ for every $\epsilon > 0$. If $U_t$ and $V_t$ are continuous,
\begin{align*}
    U_t \leq U_0 + \int_0^t F(r,U_r) dr
\end{align*}
for $t < t_U^0$,
\begin{align*}
    V_t = V_0 + \int_0^t F(r,V_r) dr
\end{align*}
for $t < t_V^0$ and $U_0 \leq V_0$, then for each $0 <\delta < U_0$ it holds that for all $t < \min(t_U^\delta,t_V^\delta)$, $U_t \leq V_t$. Moreover, this implies that $t_U^\delta  \leq t_V^\delta$.
\end{rmk}

We now discuss the bound on $X_s$. Consider \eqref{eq:XWeq} and note that $W_s^2/(W_s^2+1) \leq 1$ for all $s$ and that $\beta_s \in \Lambda_\sigma$. Hence, it makes sense to compare $X_s$ with solutions to the equation
\begin{align}\label{eq:compDE}
    dZ_s = \sigma d\sqrt{s} - \frac{c}{Z_s} ds, \quad Z_0 = z_0,
\end{align}
We will carry out this analysis for functions $X_s$ that have strayed from $0$, and hence, by symmetry, it will only be necessary for us to consider solutions to \eqref{eq:compDE} started from $z_0 > 0$. The solution of \eqref{eq:compDE} depends heavily on the value of $c$ and we shall only need the solution for $c = \sigma^2/8$, as will be seen later.
\begin{lem}\label{ZDE}
The solution, $Z_s$, to \eqref{eq:compDE} with $c = \sigma^2/8$ and $z_0 > 0$ exists for all $s \geq 0$ and is given by
\begin{align}\label{eq:Zsol}
    Z_s = \frac{\sigma \sqrt{s}}{2} + z_0 \exp\left\{ \W \left( \frac{\sigma \sqrt{s}}{2z_0} \right) \right\},
\end{align}
where $\W$ is the Lambert $\W$ function, that is, the nonnegative solution to the equation
\begin{align*}
    \W(x) e^{\W(x)} = x.
\end{align*}
\end{lem}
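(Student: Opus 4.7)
The plan is to reduce the Bernoulli-type singular ODE \eqref{eq:compDE} (with $c = \sigma^2/8$) to a separable equation in which the Lambert $\mathcal{W}$ function appears naturally, and then to check positivity and global existence.

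The first step would be to reparametrize by $u = \sqrt{s}$ to remove the square-root singularity of the driving term: with $du/ds = 1/(2u)$, equation \eqref{eq:compDE} becomes $dZ/du = \sigma - \sigma^2 u/(4Z)$. The drift $\sigma$ suggests the shift $V = Z - \sigma u/2$, which after a short algebraic simplification reduces the equation to the scale-invariant form $dV/du = \sigma V/(2V + \sigma u)$.

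I would then introduce $w = V/u$, which separates variables and yields $(2/w + \sigma/w^2)\, dw = -2\, du/u$. Integrating and returning to $(V,u)$ gives the implicit relation $2\log(V/z_0) = \sigma u/V$, where the constant of integration is pinned down by $V(0) = z_0$. Writing $\tilde W := \sigma u/(2V)$ and using $V = z_0 e^{\tilde W}$, this relation becomes $\tilde W e^{\tilde W} = \sigma\sqrt{s}/(2z_0)$, which by definition of the Lambert function on its principal branch means $\tilde W = \mathcal{W}(\sigma\sqrt{s}/(2z_0))$. Substituting back produces the claimed formula \eqref{eq:Zsol}.

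The main subtlety is justifying global existence and positivity. I would handle this by observing that $x \mapsto x e^x$ is a bijection from $[0,\infty)$ onto $[0,\infty)$, so $\mathcal{W}(\sigma\sqrt{s}/(2z_0))$ is well-defined and nonnegative for all $s \geq 0$; the right-hand side of \eqref{eq:Zsol} is therefore continuous and strictly positive, safely avoiding the singularity at $Z = 0$. A direct differentiation using the identity $\mathcal{W}'(x)(1 + \mathcal{W}(x))e^{\mathcal{W}(x)} = 1$ then verifies that the formula indeed solves \eqref{eq:compDE} with $Z_0 = z_0$; this verification also sidesteps the one delicate point of the derivation, namely the behaviour near $u = 0$ of the change of variables $w = V/u$.
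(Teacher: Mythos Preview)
Your argument is correct. The paper takes the shortest possible route: it simply records that $\mathcal{W}(0)=0$ gives $Z_0=z_0$ and then differentiates \eqref{eq:Zsol} using $\mathcal{W}'(x)=1/(x(1+\mathcal{W}(x)))e^{\mathcal{W}(x)}\cdot e^{\mathcal{W}(x)} \cdot \ldots$ (equivalently, $\mathcal{W}'(x)=1/(1+\mathcal{W}(x))\cdot 1/e^{\mathcal{W}(x)}$), verifying the ODE line by line. No attempt is made to \emph{discover} the formula.

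Your proof is a genuine derivation: the substitution $u=\sqrt{s}$ followed by the shift $V=Z-\sigma u/2$ reduces \eqref{eq:compDE} to the homogeneous equation $dV/du=\sigma V/(2V+\sigma u)$, and the standard change $w=V/u$ separates it. This explains where the Lambert function comes from rather than presupposing it, which is pedagogically more satisfying; on the other hand it is longer and introduces the artificial singularity at $u=0$ that you then have to talk yourself around. Since you close by performing the direct verification anyway (to handle that singularity), your write-up in effect contains the paper's entire proof as its last paragraph, with the preceding material serving as motivation.
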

\begin{proof}
First, we note that $\W(0)=0$ implies that $Z_0 = z_0$. Next, using that
\begin{align*}
    \frac{d\W}{dx}(x) = \frac{1}{1+\W(x)},
\end{align*}
we have
\begin{align*}
    \frac{dZ_s}{ds} &= \frac{\sigma}{4\sqrt{s}} + \frac{1}{1+\W\left(\frac{\sigma \sqrt{s}}{2z_0}\right)} \frac{\sigma}{4\sqrt{s}} \\
    &= \frac{\sigma}{2\sqrt{s}} - \frac{\sigma}{4\sqrt{s}} \frac{\W\left(\frac{\sigma \sqrt{s}}{2z_0}\right)}{1+\W\left(\frac{\sigma \sqrt{s}}{2z_0}\right)} \\
    &= \frac{\sigma}{2\sqrt{s}}-\frac{\sigma}{4\sqrt{s}} \frac{1}{1+\frac{2z_0}{\sigma\sqrt{s}}\exp\left(\W\left(\frac{\sigma \sqrt{s}}{2z_0}\right)\right)} \\
    &= \frac{\sigma}{2\sqrt{s}} - \frac{\sigma^2}{8} \frac{1}{\frac{\sigma \sqrt{s}}{2} + z_0 \exp\left(\W\left(\frac{\sigma \sqrt{s}}{2z_0}\right)\right)} \\
    &= \frac{\sigma}{2\sqrt{s}}-\frac{\sigma^2}{8}\frac{1}{Z_s},
\end{align*}
and thus we are done.
\end{proof}
We now turn to the bound for $\theta_s$. For $x_0 > 0$ and $\sigma < 4$ we define the following function
\begin{align}\label{eq:Heq}
    H_{x_0}(x) = \left( \frac{\sigma\sqrt{x}}{2} + x_0 \exp\left\{ \W\left( \frac{\sigma\sqrt{x}}{2x_0} \right) \right\} \right)^2 = \frac{\sigma^2 x}{4} \left( 1 + \frac{1}{\W\left(\frac{\sigma\sqrt{x}}{2x_0}\right)} \right)^2.
\end{align}
For $\kappa \in (\frac{\sigma^2}{4},4)$, let $M = M(\kappa)$ be such that
\begin{align}\label{eq:MKrel}
    \frac{\sigma^2}{4} \left( 1 + \frac{1}{\W\left(\frac{\sigma\sqrt{M}}{2x_0} \right)}\right)^2 = \kappa.
\end{align}
Since $\W$ is an increasing function, it follows that $H_{x_0}(x) \leq \kappa x$ for $x \geq M$.
\begin{lem}\label{Vbound}
Fix $x_0,y_0 > 0$ and $\sigma < 4$ and let $V_s$ denote the solution to the differential equation
\begin{align}\label{eq:Veq}
    \frac{dV}{ds} = \frac{1}{4} \frac{H_{x_0}(V_s)}{y_0^2+s}+\frac{1}{4}, \quad V_0 = 0.
\end{align}
Then, for any $\kappa \in (\frac{\sigma^2}{4},4)$,
\begin{align*}
    V_s \leq \max\left( \frac{M}{y_0^2}, \frac{1}{4-\kappa} \right)(y_0^2+s),
\end{align*}
where $M$ and $\kappa$ are related as in \eqref{eq:MKrel}.
\end{lem}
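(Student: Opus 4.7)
The plan is to show that $U_s := A(y_0^2 + s)$, with $A := \max(M/y_0^2,\, 1/(4-\kappa))$, is a pointwise supersolution of the ODE \eqref{eq:Veq}, and then to conclude $V_s \leq U_s$ by a standard comparison argument.

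The first step is to verify the supersolution property. Using the already-noted bound $H_{x_0}(v) \leq \kappa v$ for $v \geq M$ (a consequence of the monotonicity of $\W$ together with \eqref{eq:MKrel}), and the fact that $U_s \geq U_0 = A y_0^2 \geq M$ for all $s \geq 0$ by the choice $A \geq M/y_0^2$, one obtains
\[
\frac{1}{4}\frac{H_{x_0}(U_s)}{y_0^2+s} + \frac{1}{4} \;\leq\; \frac{\kappa A + 1}{4} \;\leq\; A \;=\; \frac{dU_s}{ds},
\]
where the final inequality is equivalent to $A(4-\kappa) \geq 1$, which holds by the choice $A \geq 1/(4-\kappa)$. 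Both constraints in the definition of $A$ are therefore used.

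To conclude, I would introduce the perturbation $U^\epsilon_s := (A+\epsilon)(y_0^2+s)$ for $\epsilon > 0$; the same calculation yields a strict excess of $\epsilon(4-\kappa)/4$ between $(U^\epsilon)'_s$ and the right-hand side of \eqref{eq:Veq} evaluated at $U^\epsilon_s$. Define $s_\epsilon := \inf\{s \geq 0 : V_s \geq U^\epsilon_s\}$, which is strictly positive because $V_0 = 0 < (A+\epsilon)y_0^2 = U^\epsilon_0$. If $s_\epsilon$ were finite, then at $s_\epsilon$ continuity would give $V_{s_\epsilon} = U^\epsilon_{s_\epsilon}$, and by monotonicity of $H_{x_0}$,
\[
V'_{s_\epsilon} \;=\; \frac{1}{4}\frac{H_{x_0}(V_{s_\epsilon})}{y_0^2+s_\epsilon} + \frac{1}{4} \;\leq\; \frac{1}{4}\frac{H_{x_0}(U^\epsilon_{s_\epsilon})}{y_0^2+s_\epsilon} + \frac{1}{4} \;<\; (U^\epsilon)'_{s_\epsilon},
\]
so $(U^\epsilon - V)'(s_\epsilon) > 0$, contradicting the defining property of $s_\epsilon$. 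Hence $V_s < U^\epsilon_s$ for all $s \geq 0$, and sending $\epsilon \downarrow 0$ gives the lemma. I do not anticipate a real obstacle here; the main subtlety is that the right-hand side of \eqref{eq:Veq} is unbounded in $v$, so Proposition~\ref{GW} does not apply off-the-shelf, but the $\epsilon$-perturbation above (or the Remark following Proposition~\ref{GW}, after localizing to the region $v \geq M$ where $H_{x_0}$ is Lipschitz) bypasses this point cleanly.
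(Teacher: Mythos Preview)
Your proof is correct and more direct than the paper's. The paper splits the analysis at $\tau = \inf\{s : V_s \geq M\}$: for $s \leq \tau$ it uses the trivial bound $V_s \leq M$, and for $s > \tau$ it invokes Proposition~\ref{GW} to compare $V$ with the explicit solution $N_s$ of the linear ODE $N' = \tfrac{\kappa}{4}\,N/(y_0^2+s) + \tfrac{1}{4}$ (with $N_\tau = V_\tau$), then rearranges the resulting formula to bound $V_s/(y_0^2+s)$ by the claimed maximum. You instead observe that the target bound $U_s = A(y_0^2+s)$ is already a global supersolution: the choice $A \geq M/y_0^2$ forces $U_s \geq M$ from time zero, so the linear bound $H_{x_0}(v) \leq \kappa v$ applies along $U$ throughout, and a single first-crossing argument with an $\epsilon$-perturbation finishes. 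Your route avoids both the explicit integration of the auxiliary linear ODE and the case distinction at the end, and it sidesteps the boundedness hypothesis of Proposition~\ref{GW} without appeal to the Remark following it. The paper's approach has the minor advantage of making visible where each term in the $\max$ arises.
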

\begin{proof}
By \eqref{eq:Veq}, $V$ is continuous and strictly increasing to $\infty$. Thus $\tau = \inf\{ s>0: V_s \geq M\}$ is finite and for $s \leq \tau$, $V_s \leq M$. For $s > \tau$, we have that
\begin{align*}
    \frac{dV_s}{ds} \leq \frac{\kappa}{4} \frac{V_s}{y_0^2+s} + \frac{1}{4},
\end{align*}
since $H_{x_0}(x) \leq \kappa x$ for $x \geq M$ and thus
\begin{align*}
    V_s - V_\tau \leq \int_\tau^s \left(\frac{\kappa}{4} \frac{V_r}{y_0^2+r} +\frac{1}{4}\right)dr.
\end{align*}
Let $N_s$ be the solution to the differential equation
\begin{align*}
    \frac{dN_s}{ds} = \frac{\kappa}{4} \frac{N_s}{y_0^2+s} + \frac{1}{4},  \quad s > \tau,
\end{align*}
given $N_\tau$. Then for $s > \tau$,
\begin{align*}
    N_s = N_\tau \left( \frac{y_0^2 + s}{y_0^2+\tau} \right)^{\frac{\kappa}{4}} + \frac{1}{4-\kappa} (y_0^2+s)-\frac{1}{4-\kappa}(y_0^2+s)^{\frac{\kappa}{4}} (y_0^2 + \tau)^{1-\frac{\kappa}{4}},
\end{align*}
and by Proposition \ref{GW}, 
\begin{align*}
    V_s \leq V_\tau \left( \frac{y_0^2+s}{y_0^2+\tau} \right)^{\frac{\kappa}{4}} + \frac{1}{4-\kappa} (y_0^2+s)-\frac{1}{4-\kappa}(y_0^2+s)^{\frac{\kappa}{4}} (y_0^2 + \tau)^{1-\frac{\kappa}{4}}.
\end{align*}
Thus,
\begin{align*}
    \frac{V_s}{y_0^2+s}-\frac{1}{4-\kappa} \leq \left( \frac{y_0^2+s}{y_0^2+\tau}\right)^{1-\frac{\kappa}{4}} \left( \frac{V_\tau}{y_0^2+\tau} - \frac{1}{4-\kappa}\right).
\end{align*}
Since $y_0^2+\tau \leq y_0^2 + s$, this implies that either
\begin{align*}
    \frac{V_s}{y_0^2+s}-\frac{1}{4-\kappa} \leq 0 \quad \textup{or} \quad \frac{V_s}{y_0^2+s} \leq \frac{V_\tau}{y_0^2 + \tau}  \leq \frac{M}{y_0^2}.
\end{align*}
Thus we have that for $s > \tau$,
\begin{align*}
    V_s \leq \max \left( \frac{M}{y_0^2}, \frac{1}{4-\kappa} \right)(y_0^2+s)
\end{align*}
which, together with the bound $V_s \leq M$ for $s \leq \tau$, concludes the proof.
\end{proof}
We are now ready to prove Proposition \ref{4bound}.
\begin{proof}[Proof of Proposition \ref{4bound}]
Fix $\sigma < 4$ and $t>0$ and let $K_\sigma = \sigma/\sqrt{16-\sigma^2}$. If $|W_s| \leq K_\sigma$ for all $s \leq t$, we are done, since $K_\sigma \leq L_\sigma$. Assume the contrary and let $s_1>0$ be such that $|W_{s_1}| > K_\sigma$. By symmetry, we may assume that $W_{s_1} > 0$. Let $s_0 = \sup\{ s < s_1: W_s \leq K_\sigma\}$ and write $(\widehat{X}_s,\widehat{Y}_s) \coloneqq (X_{s_0+s},Y_{s_0+s})$ for $s \in [0,s_1-s_0]$. Then, by \eqref{eq:XWeq},
\begin{align*}
    \widehat{X}_s &= \widehat{X}_0 + \beta_{s_0+s} - \beta_{s_0} - \int_0^s \frac{2 W_{s_0+r}^2}{W_{s_0+r}^2+1} \frac{1}{\widehat{X}_r} dr \\
    &\leq \widehat{X}_0 + \sigma\sqrt{s} - \frac{\sigma^2}{8} \int_0^s \frac{1}{\widehat{X}_r} dr,
\end{align*}
since $w \mapsto w^2/(w^2+1)$ is an increasing function and $W_{s_0+r} \geq K_\sigma$ for $r \in [0,s_1-s_0]$. By the remark after Proposition \ref{GW} together with Lemma \ref{ZDE},
\begin{align}
    \widehat{X}_s \leq \frac{\sigma\sqrt{s}}{2} +  \widehat{X}_0 \exp\left\{ \W\left(\frac{\sigma\sqrt{s}}{2 \widehat{X}_0}\right) \right\}.
\end{align}
Next, we note that
\begin{align*}
    d\widehat{Y}_s^2 = \frac{4 \widehat{Y}_s^2}{\widehat{X}_s^2 + \widehat{Y}_s^2} ds, \quad \widehat{Y}_0 = Y_{s_0}.
\end{align*}
Reparametrizing as in Section \ref{time}, that is, defining $\theta_s$ as the inverse function of $\widehat{Y}_s^2-Y_{s_0}^2$, we have
\begin{align*}
    \theta_s = \frac{1}{4} \int_0^s \left( \frac{\widehat{X}_{\theta_r}^2}{Y_{s_0}^2+r} +1 \right)dr \leq \frac{1}{4} \int_0^s \left( \frac{H_{X_{s_0}}(\theta_r)}{Y_{s_0}^2+r}+1\right) dr
\end{align*}
where $H_{X_{s_0}}$ is defined as in \eqref{eq:Heq}. Thus, by Proposition \ref{GW}, $\theta_s \leq V_s$ where $V_s$ is the solution to \eqref{eq:Veq} with $(x_0,y_0) = (X_{s_0},Y_{s_0})$. By Lemma \ref{Vbound}, we have
\begin{align}
    \theta_s \leq \max\left( \frac{M}{Y_{s_0}^2}, \frac{1}{4-\kappa} \right)(Y_{s_0}^2+s),
\end{align}
where $\kappa \in (\frac{\sigma^2}{4},4)$ and $M=M(\kappa)$ is defined by \eqref{eq:MKrel}. Moreover,
\begin{align*}
    \frac{\widehat{X}_{\theta_s}^2}{\widehat{Y}_{\theta_s}^2} = \frac{\widehat{X}_{\theta_s}^2}{Y_{s_0}^2+s} \leq \frac{H_{X_{s_0}}(\theta_s)}{Y_{s_0}^2+s}.
\end{align*}
If $\theta_s \leq M$, then
\begin{align*}
    \frac{H_{X_{s_0}}(\theta_s)}{Y_{s_0}^2+s} \leq \frac{H_{X_{s_0}}(M)}{Y_{s_0}^2+s} = \frac{\kappa M}{Y_{s_0}^2+s} \leq \frac{\kappa M}{Y_{s_0}^2}.
\end{align*}
Recalling that $H_{X_{s_0}}(x) \leq \kappa x$ for $x \geq M$, we have that if $\theta_s > M$, then
\begin{align*}
    \frac{H_{X_{s_0}}(\theta_s)}{Y_{s_0}^2+s} \leq \frac{\kappa \theta_s}{Y_{s_0}^2 + s} \leq \max \left( \frac{\kappa M}{Y_{s_0}^2},\frac{\kappa}{4-\kappa}\right).
\end{align*}
Thus,
\begin{align}\label{eq:minthis}
    \frac{|X_{s_1}|}{Y_{s_1}} \leq \max\left( \frac{\sqrt{\kappa} \sqrt{M}}{Y_{s_0}}, \frac{\sqrt{\kappa}}{\sqrt{4-\kappa}} \right).
\end{align}
Finally, we minimize \eqref{eq:minthis}, as a function of $\kappa$. Note that this is achieved for $\kappa$ such that
\begin{align*}
    \frac{\kappa M(\kappa)}{Y_{s_0}^2} = \frac{\kappa}{4-\kappa}.
\end{align*}
Since $X_{s_0}/Y_{s_0} = K_\sigma$, we obtain $L_\sigma$ as defined by \eqref{eq:Lsigma}, which concludes the proof.
\end{proof}

\section{Regularity and winding rates}
\subsection{H\"older exponents} \label{holder} 
This section proves Corollaries \ref{mainresult1} and \ref{mainresult2}, that is, we estimate the Hölder exponents for the curve $\gamma \in \Gamma_\sigma$ depending on $\sigma$. The arguments given here are standard, but we choose to give short derivations here for the convenience of the reader. In this section, constants may vary between the lines, even though they are denoted in the same way.
\begin{proof}[Proof of Corollary \ref{mainresult1} and Corollary \ref{mainresult2}]
Fix $\sigma < 4$, $y \leq 1$, $\epsilon > 0$ and $\epsilon \leq t \leq 1$ and let $s \in [0,y^2]$. First, we note that
\begin{align}
    |\gamma(t+s)-\gamma(t)| \leq |\gamma(t+s) - f_{t+s}(\lambda_{t+s} + iy))| + |\gamma(t) - f_t(\lambda_t + iy)| \nonumber\\
    + |f_{t+s}(\lambda_{t+s}+iy) - f_{t+s}(\lambda_t+iy)| + |f_{t+s}(\lambda_t+iy) - f_t(\lambda_t + iy)|. \label{eq:boundthis}
\end{align}
The first two terms we bound as follows. Since $\gamma(t) = f_t(\lambda_t + i0^+)$, we have
\begin{align*}
    |\gamma(t) - f_t(\lambda_t+iy)| \leq \int_0^y |f_t'(\lambda_t+ir)|dr.
\end{align*}
Moreover, we let
\begin{align}\label{eq:msigma}
    m_\sigma = \begin{cases}
                \min\left( L_\sigma, \frac{\pi\sigma}{\sqrt{64-\pi^2\sigma^2}}\right), &\textup{if} \ \sigma < \frac{8}{\pi} \\
                L_\sigma &\textup{if} \ \sigma \geq \frac{8}{\pi}.
                \end{cases}
\end{align}
and write $\xi_\sigma = (m_\sigma^2-1)/(m_\sigma^2+1) \in (-1,1)$. By \eqref{eq:hder} and the fact that the function $x \mapsto (x^2-1)/(x^2+1)$ is increasing on $(0,\infty)$ together with Proposition \ref{8pibound} and Proposition \ref{4bound} we have that
\begin{align*}
    |f_t'(\lambda_t+ir)| = \exp\left\{ \int_0^t \frac{W_u^2-1}{W_u^2+1} d\log Y_u \right\} \leq Y_t^{\xi_\sigma} r^{-\xi_\sigma}.
\end{align*}
By \eqref{eq:Ybounds}, we have that
\begin{align*}
    Y_t^{\xi_\sigma} \leq Y_{t+s}^{\xi_\sigma} \leq C(\sigma,\epsilon),
\end{align*}
since $s,t \leq 1$ and therefore
\begin{align*}
    \int_0^y |f_t'(\lambda_t+ir)| dr \leq C(\sigma,\epsilon) \int_0^y r^{-\xi_\sigma} dr = \frac{C(\sigma,\epsilon)}{1-\xi_\sigma} y^{1-\xi_\sigma},
\end{align*}
and we have a bound on the first two terms of \eqref{eq:boundthis}. Let $I_{s,t}^y$ denote the line segment connecting $\lambda_t+iy$ to $\lambda_{t+s}+iy$. Using the distortion theorem and that $\sqrt{s} \leq y$, have that
\begin{align*}
    &|f_{t+s}(\lambda_{t+s}+iy) - f_{t+s}(\lambda_t+iy)| \leq |\lambda_{t+s}-\lambda_t| \max_{w \in I_{s,t}^y} |f_{t+s}'(w)| \\
    &\leq \sigma \sqrt{s} \max_{w \in I_{s,t}^y} |f_{t+s}'(w)| \leq C(\sigma) y | f_{t+s}'(\lambda_{t+s} +iy)|.
\end{align*}
Moreover, since
\begin{align*}
    \dist(f_t(\lambda_t+iy),K_t) \leq \int_0^y |f_t'(\lambda_t+ir)| dr,
\end{align*}
the Koebe 1/4 theorem implies that
\begin{align*}
    y |f_t'(\lambda_t + iy)| \leq 4 \int_0^y |f_t'(\lambda_t+ir)| dr,
\end{align*}
and thus that
\begin{align*}
    |f_{t+s}(\lambda_{t+s}+iy) - f_{t+s}(\lambda_t+iy)| \leq C(\sigma) \int_0^y |f_{t+s}'(\lambda_{t+s}+ir)| dr.
\end{align*}
Finally, since $s \in [0,y^2]$, Lemma 3.5 of \cite{JVL11} implies that
\begin{align*}
    |f_{t+s}(\lambda_t+iy) - f_t(\lambda_t+iy)| \leq Cy |f_t'(\lambda_t+iy)| \leq C \int_0^y |f_t'(\lambda_t+ir)| dr.
\end{align*}
Thus, by \eqref{eq:boundthis} and the above inequalities, letting $s = y^2$, we have
\begin{align*}
    |\gamma(t+s)-\gamma(t)| \leq C(\sigma,\epsilon) s^{\frac{1}{1+m_\sigma^2}},
\end{align*}
that is, $\gamma$ is Hölder continuous with exponent $\alpha$ for each 
\begin{align*}
    \alpha \le \frac{1}{1+m_\sigma^2}
\end{align*}
and this concludes the proof.
\end{proof}
\begin{rmk}We have seen that the H\"older exponent is determined by the behavior of the derivative near the tip of the curve, which in turn is estimated using \eqref{eq:hder}. The derivative can can be explicitly bounded if the reverse flow stays in a particular cone so that $\sup |W_r| \le C$. However, the sharp behavior depends on the integrated values of $(W_r^2-1)/(W_r^2+1)$ and we expect that the optimal $L^1$-bound is strictly smaller than the integrated optimal $L^\infty$-bound. In the case of SLE, there is of course no almost sure $L^\infty$-bound, but the process $W_r$ has an invariant distribution in an appropriately weighted measure, and \eqref{eq:hder} can be precisely estimated using this invariant distribution and an intermediate deviations argument, see \cite{JVL11}. In the case of a general Lip-$\frac{1}{2}$ function these techniques are not available. \end{rmk}

\subsection{Winding rates}\label{winding}
Theorem~\ref{Cbounds} also easily implies estimates on the winding rate at the tip of the curve $\gamma \in \Gamma_\sigma$ depending on $\sigma$, that is, we obtain estimates on the growth rate of $|\arg f_t'(\lambda_t+iy) \, |$ as $y$ tends to $0$. Geometrically, this measures the winding of the hyperbolic geodesic from $\gamma(t)$ to $\infty$ in $\UH \setminus \gamma([0,t])$, close to the tip $\gamma(t)$ when the geodesic is parametrized by harmonic measure. By \eqref{eq:harg} and \eqref{eq:Ybounds} we have a trivial bound
\begin{align*}
    |\arg f_t'(\lambda_t+iy) \, | \leq 2\log \frac{Y_t}{y} \leq \log(y^2+4t) + 2\log (y^{-1}),
\end{align*}
since $2|W_s|/(W_s^2+1) \leq 1$.
By the easy estimate $\CC_{t,\sigma} \leq \sigma/\sqrt{4-\sigma^2}$ of \cite{RTZ18}, we get a non-trivial bound in the case $\sigma < \sqrt{2}$. By virtue of Corollary \ref{mainresult2}, we have the following improvement for $\sigma < 4\sqrt{2}/\pi$.
\begin{prop}
Let $\gamma \in \Gamma_\sigma$ with $\sigma < 4\sqrt{2}/\pi$. Then
\begin{align*}
    |\arg f_t'(\lambda_t+iy) \, | \leq \frac{\pi \sigma \sqrt{64-\pi^2\sigma^2}}{64} \log(y^2 + 4t)+\frac{\pi \sigma\sqrt{64-\pi^2\sigma^2}}{32} \log y^{-1}
\end{align*}
\end{prop}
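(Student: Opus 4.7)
The plan is to read off the winding bound directly from the integral representation \eqref{eq:harg} combined with the cone estimate of Proposition \ref{8pibound}. Writing $\hat{h}_t'(iy) = f_t'(\lambda_t + iy)$, we have
\[
|\arg f_t'(\lambda_t+iy)| = 2\left|\int_0^t \frac{W_r}{W_r^2+1}\,d\log Y_r\right| \leq \int_0^t \left|\frac{2W_r}{W_r^2+1}\right| d\log Y_r,
\]
using that $Y_r$ is increasing, so $d\log Y_r \geq 0$. The task is to bound the integrand pointwise and then integrate out the $d\log Y_r$ factor.

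The key observation is that $\phi(w) = 2w/(w^2+1)$ satisfies $|\phi(w)| \leq 2m/(m^2+1)$ whenever $|w| \leq m$, provided $m \leq 1$ (since $\phi$ is increasing on $[0,1]$ and odd). By Proposition \ref{8pibound} we have $|W_r| \leq m := \pi\sigma/\sqrt{64-\pi^2\sigma^2}$, and the constraint $m \leq 1$ is exactly $\pi^2\sigma^2 \leq 64 - \pi^2\sigma^2$, i.e., $\sigma \leq 4\sqrt{2}/\pi$, which matches the hypothesis. A short algebraic computation, using $m^2+1 = 64/(64-\pi^2\sigma^2)$, then gives
\[
\frac{2m}{m^2+1} = \frac{\pi\sigma\sqrt{64-\pi^2\sigma^2}}{32}.
\]

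It remains to bound $\int_0^t d\log Y_r = \log(Y_t/y)$. By \eqref{eq:Ybounds} we have $Y_t \leq \sqrt{y^2 + 4t}$, so
\[
\log(Y_t/y) \leq \tfrac{1}{2}\log(y^2+4t) + \log y^{-1}.
\]
Combining these two ingredients yields exactly the stated inequality.

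The only mildly delicate point is the role of the threshold $\sigma < 4\sqrt{2}/\pi$: for $\sigma \in [4\sqrt{2}/\pi, 8/\pi)$ the cone bound $m$ from Proposition \ref{8pibound} exceeds $1$, so the pointwise maximum of $|\phi(W_r)|$ is attained at $|W_r| = 1$ rather than at the boundary of the cone, and one would obtain the weaker bound $|\arg f_t'(\lambda_t+iy)| \leq \log(y^2+4t) + 2\log y^{-1}$ (matching the trivial estimate already recorded in the text). Thus the improvement presented in the proposition genuinely relies on $m \leq 1$, which is why the threshold is $4\sqrt{2}/\pi$ rather than the $8/\pi$ appearing in the underlying cone estimate.
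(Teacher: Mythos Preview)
Your proof is correct and follows essentially the same approach as the paper: bound the integrand in \eqref{eq:harg} pointwise using the cone estimate of Proposition~\ref{8pibound} together with the monotonicity of $w\mapsto w/(w^2+1)$ on $[0,1]$, then integrate against $d\log Y_r$ using \eqref{eq:Ybounds}. Your closing remark explaining why the threshold is $4\sqrt{2}/\pi$ rather than $8/\pi$ is a nice addition not spelled out in the paper.
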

\begin{proof}
By \eqref{eq:8pibound} and since $\sigma < 4\sqrt{2}/\pi$, we have that
\begin{align*}
    |W_s(iy)| \leq \frac{\pi\sigma}{\sqrt{64-\pi^2\sigma^2}} < 1.
\end{align*}
By \eqref{eq:harg}, Proposition \ref{8pibound}, Proposition \ref{4bound}, \eqref{eq:Ybounds} and that the function $x \mapsto x/(x^2+1)$ is increasing on $[0,1]$, we have that
\begin{align*}
    |\arg f_t'(\lambda_t+iy) \, | &\leq 2 \int_0^t \frac{|W_r|}{W_r^2+1} d\log Y_r \leq 2 \frac{\pi \sigma \sqrt{64-\pi^2\sigma^2}}{64} \log\left( \frac{Y_t}{y} \right) \\
    &\leq \frac{\pi \sigma \sqrt{64-\pi^2\sigma^2}}{64} \log(y^2 + 4t)+\frac{\pi \sigma \sqrt{64-\pi^2\sigma^2}}{32} \log (y^{-1})
\end{align*}
which is the desired estimate.
\end{proof}

\section{Additional remarks}\label{smirnovseq}
In this section we collect a few simple observations that follow essentially directly from known results. We also speculate about the optimal bounds for the various exponents.

For a curve $\gamma$, consider the optimal H\"older exponent achievable through reparametrization:
\[\hat{\alpha}(\gamma)=\sup \{\alpha: \gamma \text{ can be reparametrized to be H\"older-} \alpha \}.
\]
By Corollary~\ref{mainresult2}, for small $\sigma$, we know that \[\hat{\alpha}(\Gamma_\sigma):=\inf\{\hat{\alpha}(\gamma) : \gamma \in \Gamma_\sigma\} \ge 1-\pi^2 \sigma^2/64\]
which, as remarked, trivially gives an upper bound on the maximal dimension for $\gamma \in \Gamma_\sigma$.
However, under weak regularity assumptions, satisfied by $\gamma \in \Gamma_\sigma$, the optimal estimate is equal to the maximal reciprocal Minkowski dimension. Recall that a $k$-quasiarc is the image of a line segment under a $k$-quasiconformal homeomorphism of $\mathbb{C}$.
\begin{prop}\label{smirnov}
Suppose $\gamma$ is a quasiarc with Minkowski dimension $d_M$. Then 
\begin{equation}\label{AB}
\hat{\alpha}(\gamma) = d_M^{-1}.
\end{equation}
In particular, if $\gamma$ is a $k$-quasiarc, then
\[
\hat{\alpha}(\gamma) \ge \frac{1}{1+k^2}.
\]
\end{prop}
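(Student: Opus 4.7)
The plan is to prove the identity \eqref{AB} by matching inequalities, then deduce the final bound from Smirnov's dimension estimate for $k$-quasicircles.

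The upper bound $\hat\alpha(\gamma) \le d_M^{-1}$ holds for any compact connected set and requires no quasiconformal input: if $\tilde\gamma$ is a H\"older-$\alpha$ parametrization of $\gamma$ with constant $C$, then covering $[0,1]$ by $\lceil 1/\epsilon\rceil$ intervals of length $\epsilon$ sends $\gamma$ into that many balls of radius $C\epsilon^\alpha$. Writing $\delta = C\epsilon^\alpha$ this gives $N(\gamma,\delta) \lesssim \delta^{-1/\alpha}$, so $d_M(\gamma) \le 1/\alpha$; taking the supremum over admissible $\alpha$ yields the desired inequality.

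The matching lower bound $\hat\alpha(\gamma) \ge d_M^{-1}$ is the substantive content and relies on two classical metric-geometry inputs that I would simply cite: (i) any quasiarc $\gamma$ is Ahlfors regular of its Hausdorff dimension $d$, and in particular $d_H = d_M$; and (ii) any such Ahlfors $d$-regular quasiarc admits a $(1/d)$-bi-H\"older parametrization, for instance by normalized $\mathcal{H}^d$-arclength (set $\tilde\gamma(t)$ so that $\mathcal{H}^d(\gamma_{p,\tilde\gamma(t)}) = t\,\mathcal{H}^d(\gamma)$, and combine the lower and upper Ahlfors bounds with bounded turning to conclude that $(t-s)\,\mathcal{H}^d(\gamma) \asymp |\tilde\gamma(s)-\tilde\gamma(t)|^d$). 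Together these give $\hat\alpha(\gamma) \ge 1/d = 1/d_M$.

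For the final assertion, a $k$-quasiarc is by definition the image of a line segment under a $k$-quasiconformal homeomorphism of $\mathbb{C}$, and hence extends to a $k$-quasicircle; Smirnov's theorem bounds the Hausdorff dimension of the latter by $1+k^2$, and Ahlfors regularity upgrades this to $d_M \le 1+k^2$. Feeding this into \eqref{AB} yields $\hat\alpha(\gamma) \ge 1/(1+k^2)$. The main obstacle is really the package of classical inputs (i)--(ii) above, which I would cite from standard references (Gehring, Tukia, V\"ais\"al\"a, \dots) rather than reprove.
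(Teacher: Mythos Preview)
Your upper bound $\hat\alpha(\gamma)\le d_M^{-1}$ is fine and standard. The gap is in your input (i): it is \emph{not} true that every quasiarc is Ahlfors regular. Being a quasiarc is equivalent to bounded turning (the Ahlfors three-point condition), and nothing prevents a bounded-turning arc from being a straight segment on one sub-arc and a von Koch--type curve of dimension $d>1$ on another. For such a $\gamma$ there is no exponent for which it is Ahlfors regular, and the $\mathcal H^{d}$-arclength parametrization you propose is not even a homeomorphism (the $\mathcal H^{d}$-measure of the smooth sub-arc vanishes, so $\tilde\gamma$ is constant there). The same issue resurfaces in your derivation of the second assertion: for a fixed $k$-quasiarc one cannot ``upgrade'' Smirnov's Hausdorff bound $d_H\le 1+k^2$ to a Minkowski bound via Ahlfors regularity; the paper in fact explicitly flags that $d_H=d_M$ can fail for an individual quasiarc.

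The paper's argument avoids both problems. For \eqref{AB} it follows Aizenman--Burchard and builds a reparametrization directly from segment-covering numbers,
\[
\tau(s)=c\sum_n n^{-2}M(\gamma[0,s],2^{-n})\,2^{-rn},
\]
showing this yields a H\"older-$1/r'$ parametrization for every $r'>r$ whenever $M(\gamma,\delta)\lesssim\delta^{-r}$; bounded turning alone (via a stopping-time partition of $\gamma$ into pieces of diameter $\asymp\delta$) then gives $M(\gamma,\delta)\lesssim\delta^{-r}$ for each $r>d_M$. No regularity of the Hausdorff measure is needed. For the second assertion the paper invokes Astala's theorem that the \emph{supremum} of the Minkowski dimension over all $k$-quasiarcs equals the supremum of the Hausdorff dimension; combined with Smirnov this yields $d_M(\gamma)\le 1+k^2$ for every $k$-quasiarc, and \eqref{AB} finishes.
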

\begin{rmk}
The second statement uses Smirnov's result \cite{Smi10}, but in fact, for small $k$, Ivrii's recent stronger result \cite{Ivr15} stating that the dimension of a $k$-quasiarc is $1+\Sigma^2 k^2 + O(k^{8/3-\epsilon})$ for small $k$, combined with Hedenmalm's estimate $\Sigma < 1$ \cite{Hed16} gives a better bound for small $k$.
\end{rmk}
\begin{proof}[Sketch of proof.]
The first statement follows from the ideas of Section~2 of \cite{AB98}. The second assertion follows from the first, combined with Smirnov's result on the maximal Hausdorff dimension of a $k$-quasiarc, and the fact, due to Astala (see Theorem~1.5 of \cite{A88}), that the maximal Hausdorff dimension over the set of $k$-quasiarcs equals the corresponding maximal Minkowski dimension. (This is not true in general for a fixed quasiarc.) Let us sketch the proof of \eqref{AB} following \cite{AB98}. For a curve $\gamma$ and $\delta > 0$, let $M(\gamma, \delta)$ be the minimal number of curve segments of diameter at most $\delta$ needed to cover $\gamma$. We first claim that if $M(\gamma, \delta) \lesssim \delta^{-r}$ then for any $r' > r$ there is a H\"older-$1/r'$ parametrization of $\gamma$. Indeed, choosing first any parametrization, $\gamma(s), \, s \in [0,1]$, not constant on any subinterval, we may define the following reparametrization 
\[
\tau(s) = \frac{\sum_n n^{-2} M(\gamma[0,s], 2^{-n}) 2^{-rn} }{\sum_n n^{-2} M(\gamma, 2^{-n}) 2^{-rn} }.
\]
Note that the denominator is bounded. Suppose $\epsilon > 0$ and $| \gamma(s_1) - \gamma(s_2)| \ge \epsilon$. If $2^{-n} < \epsilon$ and sufficiently small, then we have $M(\gamma[0,s_2], 2^{-n}) - M(\gamma[0,s_1], 2^{-n}) \ge 1$. Hence for all $\epsilon > 0$ small enough,
\[
\tau(s_2)-\tau(s_1) \ge C \sum_{n > -\log \epsilon} n^{-2} 2^{-rn} \ge C' \epsilon^{r'}.
\]
On the other hand, we can estimate $M$ in terms of $d_M$ as follows: By the quasiarc property there is a constant $C_\gamma < \infty$ such that the following holds. Given $\delta > 0$, partition $\gamma$ into segments of diameter $C_\gamma \delta$ by considering stopping times defined as follows: $t_0=0$ and then for $j = 1,2,\ldots,N_\delta$, $t_j = \inf\{ t \ge t_{j-1}: |\gamma(t) - \gamma(t_{j-1})| \ge C_\gamma \delta \}$ (terminating if the end point of the curve is within distance $C_\gamma \delta$). Then given any cover of $\gamma$ by balls of diameter $\delta$, by the quasicircle property, any ball in this cover contains at most one of the partitioning points, $\{\gamma(t_j)\}$. Hence for any $r > d_M$, we obtain $M(\gamma, \delta) \le N_\delta \lesssim \delta^{-r}$.
\end{proof}
Given Proposition~\ref{smirnov}, we would like to relate $k$ and $\sigma$ quantitatively. Marshall and Rohde \cite{MR05} show that these parameters are quantitatively related in the sense that $k \to 0$ as $\sigma \to 0$. In the other direction, taking $\lambda(t) = \sigma \sqrt{t}$ shows that one can (of course) not say anything for general quasiarcs, but if the dilatation of the quasiconformal homeomorphism of $\mathbb{H}$ defining a quasislit tends to $1$ (so that $k \to 0$), it does follow that $\sigma \to 0$. However, beyond these observations, no estimates appear to be available.  

As was pointed out to us by Rohde, a recent result of Tran can be used to get an estimate for small $\sigma$: Fix $\lambda \in \Lambda_\sigma$ with $\sigma < 1/3$. Set $\tilde \lambda = \lambda/(3 \|\lambda\|_{1/2})$ so that $\|\tilde \lambda\|_{1/2}=1/3$. On the other hand, since $\tilde{\lambda} \in \Lambda_{1/3}$ we can use Theorem~1.3 of \cite{Tran_holomorphic} to embed $\gamma^{\lambda}$ in a holomorphic motion compatible with the Loewner equation in the following sense. For each $\tau \in \mathbb{D}$, it is possible to solve the Loewner equation \eqref{eq:LPDE} with the complex driving term, $t \mapsto \tau \tilde{\lambda}_t$. The solution extends to a conformal map $g_t : \hat{\mathbb{C}} \setminus L_t \to \hat{\mathbb{C}} \setminus R_t$, where $L_t, R_t$ are both simple curves. The curve $L_t=L_t^\tau$ moves holomorphically with $\tau$, and so if we write $S=[0,2i]$, then there exists a 
 holomorphic motion of $S$, $f: \mathbb{D} \times  S  \to \mathbb{C}$, such that $f(3 \|\lambda\|_{1/2}, S) = \gamma^{\lambda}$ (as a set). By the generalized $\lambda$-lemma, $f$ extends to a holomorphic motion of $ \mathbb{C}$, $F :  \mathbb{D} \times  \mathbb{C} \to  \mathbb{C}$, and for each $\tau \in \mathbb{D}$, $F(\tau, \cdot):  \mathbb{C} \to  \mathbb{C}$ is a quasiconformal homeomorphism of dilatation at most $(1+|\tau|)/(1-|\tau|)$. Setting $\tau = 3 \|\lambda\|_{1/2} $ we conclude the following.

\begin{prop}\label{huy}
Suppose $\gamma \in \Gamma_\sigma$ with $\sigma < 1/3$. Then $\gamma$ is a $3\sigma$-quasiarc.
\end{prop}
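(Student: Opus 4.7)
The plan is to execute precisely the construction outlined in the paragraph preceding the statement, packaging Tran's holomorphic embedding of Loewner curves with complex driving (Theorem~1.3 of \cite{Tran_holomorphic}) together with S{\l}odkowski's generalized $\lambda$-lemma into the stated quantitative conclusion.

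First I would dispose of the trivial case $\|\lambda\|_{1/2}=0$ (then $\gamma$ is a vertical segment, hence a $0$-quasiarc) and otherwise normalize by setting $\tilde\lambda = \lambda /(3\|\lambda\|_{1/2})$, so that $\tilde\lambda \in \Lambda_{1/3}$. Next, I would invoke Tran's theorem applied to $\tilde\lambda$: for every complex parameter $\tau \in \mathbb{D}$ the driving $t \mapsto \tau \tilde\lambda_t$ generates, through the complexified Loewner equation, a simple curve $L_t^\tau$, and the resulting family assembles into a holomorphic motion $f : \mathbb{D} \times S \to \mathbb{C}$ of the vertical segment $S=[0,2i]$ with $f(0,\cdot)=\mathrm{id}_S$ and $f(3\|\lambda\|_{1/2}, S) = \gamma^\lambda$ as a set.

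I would then appeal to the generalized $\lambda$-lemma to extend $f$ to a holomorphic motion $F : \mathbb{D} \times \mathbb{C} \to \mathbb{C}$ such that for each $\tau \in \mathbb{D}$, $F(\tau,\cdot) : \mathbb{C} \to \mathbb{C}$ is a quasiconformal homeomorphism with maximal dilatation at most $(1+|\tau|)/(1-|\tau|)$. Finally, setting $\tau_0 = 3\|\lambda\|_{1/2} \leq 3\sigma < 1$, the map $F(\tau_0, \cdot)$ sends $S$ onto $\gamma$ and has dilatation at most $(1+3\sigma)/(1-3\sigma)$; by definition, this means that $F(\tau_0,\cdot)$ is $3\sigma$-quasiconformal and hence that $\gamma$ is a $3\sigma$-quasiarc.

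The only non-routine step is verifying the exact form of Tran's construction, specifically that the complex parameter $\tau$ in the holomorphic motion can be identified with $3\|\lambda\|_{1/2}$ and that $F(\tau_0, S)$ literally equals $\gamma^\lambda$ as a subset of $\overline{\mathbb{H}}$ rather than some rescaled or reparametrized variant; this amounts to bookkeeping with the normalizations in Theorem~1.3 of \cite{Tran_holomorphic}. Once this identification is in place, the bound on the dilatation is immediate from the $\lambda$-lemma and no further estimates are required.
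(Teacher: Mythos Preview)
Your proposal is correct and follows exactly the argument given in the paragraph preceding the proposition: normalize to put $\tilde\lambda$ in $\Lambda_{1/3}$, invoke Tran's holomorphic motion of Loewner curves with complex driving, extend via the generalized $\lambda$-lemma, and read off the dilatation bound at $\tau_0 = 3\|\lambda\|_{1/2}$. Your added handling of the trivial case $\|\lambda\|_{1/2}=0$ and the remark about checking normalizations in Tran's theorem are sensible refinements, but the core argument is identical to the paper's.
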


\begin{rmk}
The condition that $\tilde \lambda \in \Lambda_{1/3}$ in \cite{Tran_holomorphic} is known to not be optimal and indeed it seems reasonable to believe that any Lip-$\frac{1}{2}$ function with seminorm strictly bounded by $4$ can similarly be embedded in a holomorphic motion. Assuming this, the argument of Proposition~\ref{huy} would show that any $\gamma \in \Gamma_\sigma$ is a $\sigma/4$-quasiarc, which, using Ivrii's asymptotics, gives the optimal regularity exponent bound $1/(1+\Sigma^2 \sigma^2/16 + o(\sigma^2))$ for small $\sigma$, slightly larger than the conjectured optimal exponent for the capacity parametrization, $1-\sigma^2/16$, which, incidentally, is what Smirnov's estimate would give.   
\end{rmk}

\end{document}